\newcommand\NN{{\mathbb N}}
\newcommand\RR{{{\mathbb R}}}
\newcommand\cA{{\mathcal A}}
\newcommand\cC{{\mathcal C}}
\newcommand\cD{{\mathcal D}}
\newcommand\cE{{\mathcal E}}
\newcommand\cL{{\mathcal L}}
\newcommand\cN{{\mathcal N}}
\newcommand\cX{{\mathcal X}}
\def\p{\partial}
\newcommand\pP{{\bf P}}
\newcommand\iI{{\bf I}}
\newcommand\eps{{\varepsilon}}
\newcommand\grad{{\nabla_{\!x}}}
\newtheorem{theo}{Theorem}[section]
\newtheorem{lemm}[theo]{Lemma}
\newtheorem{theorem}{Theorem}[section]
\newtheorem{lemma}[theorem]{Lemma}
\newtheorem{proposition}[theorem]{Proposition}
\numberwithin{equation}{section}
\begin{document}

\title[]
{Incompressible Navier-Stokes-Fourier Limit from The Boltzmann Equation: Classical Solutions}

\author{Ning Jiang, Chao-Jiang Xu  \& Huijiang Zhao}
\date{\today}

\address{\noindent \textsc{N. Jiang \\
Mathmatical Sciences Center, Tsinghua University 100084, Beijing, P.R. China}}
\email{njiang@tsinghua.edu.cn}
\address{\noindent \textsc{C.-J. Xu \\
Universit\'e de Rouen, CNRS UMR 6085, Laboratoire de Math\'ematiques, 76801 Saint-Etienne du Rouvray, France\\
and\\
School of Mathematics, Wuhan university 430072, Wuhan, P.R. China}}
\email{Chao-Jiang.Xu@univ-rouen.fr}
\address{\noindent \textsc{H.-J. Zhao \\
School of Mathematics, Wuhan university 430072, Wuhan, P.R. China}}
\email{hhjjzhao@whu.edu.cn}

\keywords{}
\subjclass[2000]{35Q20, 76P05, 82B40, 35R11.}

\begin{abstract}
The global classical solution to the incompressible Navier-Stokes-Fourier equation with small initial data in the whole space is constructed through a zero Knudsen number limit from the solutions to the Boltzmann equation with general collision kernels. The key point is the uniform estimate of the Sobolev norm on the global solutions to the Boltzmann equation.
\end{abstract}

\maketitle

\section{Introduction}
\subsection{Boltzmann Equation}
We consider the following Boltzmann equation in the incompressible Navier-Stokes scaling,
\begin{equation}\label{e1}
\begin{cases}
\partial_tf_\varepsilon+\frac{1}{\varepsilon} v\cdot\nabla_{x}f_\varepsilon=\frac{1}{\varepsilon^2}
\mathcal{Q}(f_\varepsilon, f_\varepsilon),\\
f_\varepsilon|_{t=0}=f_{\varepsilon, 0},
\end{cases}
\end{equation}
where $\eps$ denotes the Knudsen number, which is the ratio of the mean free path and the macroscopic length scale. Here $f(t,x,v)$ is the density distribution function of particles, having position $x\in \mathbb{R}^3$ and velocity $v\in \mathbb{R}^3$ at time $t \geq 0$. The right-hand side of \eqref{e1} is the Boltzmann bilinear collision operator, which is given in the classical $\sigma\mbox{-}$representation by
\[
\mathcal{Q}(g, f)=\int_{\RR^3}\int_{\mathbb S^{2}}B\left({v-v_*},\sigma
\right)
 \left\{g'_* f'-g_*f\right\}\,\mathrm{d}\sigma\mathrm{d}v_*\,,
\]
which is well-defined for suitable functions $f$ and $g$ specified later. In above expression, $f'_*=f(t,x,v'_*), f'=f(t,x,v'), f_*=f(t,x,v_*), f=f(t,x,v)$, and for $\sigma \in \mathbb{S}^2$,
\begin{equation}\nonumber
v'=\frac{v+v_*}{2}+ \frac{|v-v_*|}{2}\sigma\,, \quad v'_*= \frac{v+v_*}{2}- \frac{|v-v_*|}{2}\sigma\,,
\end{equation}
which gives the relation between the post and pre collisional velocities that follow from the conservation of momentum and kinetic energy.

For monatomic gas, the non-negative cross-section $B(z,\sigma)$ depends only on $|z|$ and the scalar product $\frac{z}{|z|}\cdot \sigma$. We assume that it takes the form
\begin{equation}\label{kernel form}
B(v-v_*, \cos \theta)= |v-v_*|^\gamma b(\cos \theta)\,, \quad \cos \theta= \frac{v-v_*}{|v-v_*|}\cdot \sigma\,, \quad 0\leq \theta \leq \frac{\pi}{2}\,,
\end{equation}
where $\gamma > -3$ is the index of the kinetic factor. For the angular factor $b(\cos \theta)$, we consider two cases:

\begin{itemize}
\item The non-cutoff case, $b(\cos \theta)$ behaves like
\begin{equation}\label{non-cutoff kernel}
b(\cos \theta)\sim K  \theta^{-2-2s}\,,\quad \mbox{when}\quad\! \theta\rightarrow 0^+\,,
\end{equation}
for some constants $K>0$ and $0< s < 1$.
\item The Grad angular cutoff case, $b(\cos \theta)$ satisfies
\begin{equation}\label{Grad}
\int^{\frac{\pi}{2}}_0 b(\cos \theta)\sin \theta\,\mathrm{d}\theta < \infty\,.
\end{equation}
\end{itemize}

We consider the fluctuation around a renormalized Maxwellian distribution
\begin{equation}\nonumber
\mu=(2\pi)^{-\frac{3}{2}}\exp\left(-\tfrac{|v|^2}{2}\right)\,,
\end{equation}
by setting $f_\varepsilon(t,x,v)=\mu+\varepsilon\sqrt{\mu}g_\varepsilon (t,x,v)$, and
\begin{equation}\nonumber
  \Gamma(g,h)=\mu^{-1/2}\mathcal{Q}(\sqrt{\mu}g, \sqrt{u}h)\,,
\end{equation}
the linearized Boltzmann operator $\mathcal{L}$ takes the form
\begin{equation}\nonumber
 \mathcal{L}g=-\Gamma(\sqrt{\mu}, g)-\Gamma(g, \sqrt{\mu})\,.
\end{equation}

Now the original problem \eqref{e1} is reduced to the Cauchy problem for the fluctuation $g_\eps$
\begin{equation}\label{e2}
\begin{cases}
\partial_tg_\varepsilon+\frac 1 \varepsilon v\cdot\nabla_{x}g_\varepsilon+\frac 1{\varepsilon^2}\mathcal{L}g_\varepsilon=\frac{1}{\varepsilon}
\Gamma(g_\varepsilon, g_\varepsilon),\\
g_\varepsilon|_{t=0}=g_{\varepsilon, 0}\,,
\end{cases}
\end{equation}
where $g_{\eps,0}$ is give by $f_{\eps,0}(x,v)= \mu + \eps \sqrt{\mu} g_{\eps,0}(x,v)$.

\subsection{Notations} Before we state our main theorems, we introduce some notations. It is well known that the null space $\mathcal{N}$ of $\mathcal{L}$ is spanned by the set of collision invariants:
$$
\mathcal{N}=\mbox{Span}\{\sqrt{\mu}, v\sqrt{\mu}, |v|^2\sqrt{\mu}\}\,,
$$
that is, $(\mathcal{L}g, g)_{L^2(\mathbb{R}^3_v)}=0$ if and only if $g \in \mathcal{N}$. We also let $\mathcal{N}^\perp$ denote the orthogonal space of $\mathcal{N}$ with respect to the standard inner product $(\cdot, \cdot)_{L^2(\mathbb{R}^3_v)}$.

Let us recall that the non-isotropic norm introduced in the series work of Alexandre-Morimoto-Ukai-Xu-Yang. Here for simplicity we use [AMUXY] to denote the references \cite{amuxy3}, \cite{amuxy4-2}, \cite{amuxy4-3}, \cite{amuxy7}.
\begin{equation}\label{triple norm}
\begin{aligned}
|\!|\!| g|\!|\!|^2=& \iiint_{\mathbb{R}^3_v\times \mathbb{R}^3_{v_*} \times \mathbb{S}^2} B(v-v_*, \sigma)\mu_*^2\, (g'-g)^2 \mathrm{d}\sigma\mathrm{d}v_*\mathrm{d}v \, \\
&+
\iiint_{\mathbb{R}^3_v\times \mathbb{R}^3_{v_*} \times \mathbb{S}^2} B(v-v_*, \sigma) g^2_* ({\mu'}\,\, - {\mu}\,\,)^2\, \mathrm{d}\sigma\mathrm{d}v_* \mathrm{d}v .
\end{aligned}
\end{equation}
See also Gressman-Strain \cite{G-S} for another equivalent definition of this norm.

Using this non-isotropic norm, we define that for $N \in \mathbb{N}$,
$$
\|f\|^2_{\cX^N(\RR^6_{x,v})}=\sum_{|\alpha|\le N}\int_{\RR^3_x}|\!|\!|\p^{\alpha}_xf|\!|\!|^2 \mathrm{d}x.
$$

Let us recall the macro-micro decomposition of solutions
$$
g = \pP g + (\iI-\pP)g = g_1 + g_2\,,
$$
where $\pP$ is the orthogonal projection to $\mathcal{N}$. $g_1=\pP g$ is called the macroscopic projection of $g(t, x, v)$,
\begin{equation}\label{Pg}
\pP g =\{a(t, x)+v\cdot b(t, x)+|v|^2c(t, x)\}\sqrt{\mu},\quad \cA (g)=(a,b,c)\,,
\end{equation}
while $g_2=(\iI-\pP)g$ is called the kinetic part of $g$.

Notice that
\begin{align*}
\|g\|_{H^N(\RR^3_x; L^2(\RR^3_v))}^2&\sim
\|\cA(g)\|_{H^N(\RR^3_x)}^2+\|g_2\|_{H^N(\RR^3_x; L^2(\RR^3_v))}^2,
\\
\|g\|^2_{\cX^{N}(\RR^6_{x,v})}&\sim \|\cA(g)\|_{H^N(\RR^3_x)}^2+\|g_2\|_{\cX^N(\RR^6)}^2.
\end{align*}

We introduce the following temporal energy functional and dissipation rate functional respectively
\begin{equation}\label{norms}
\begin{aligned}
&\cE^2_N(g)=\|g\|_{H^N(\RR^3_x; L^2(\RR^3_v))}^2=\|g_1\|_{H^N(\RR^3_x; L^2(\RR^3_v))}^2+
\|g_2\|_{H^N(\RR^3_x; L^2(\RR^3_v))}^2\\
&\qquad\qquad\sim
\|\cA(g)\|_{H^N(\RR^3_x)}^2+\|g_2\|_{H^N(\RR^3_x; L^2(\RR^3_v))}^2,
\\
&\cC_{N}(g)= \|\nabla_x\cA(g)\|_{H^{N-1}(\RR^3_x)}\,\sim\,\|\nabla_x g_1\|_{H^{N-1}(\RR^3_x; L^2(\RR^3_v))},
\\&
\cD_N(g)=\|g_2\|_{\cX^{N}(\RR^6_{x,v})}\,.
\end{aligned}
\end{equation}
Remark that
\begin{equation}\label{C-D}
\cC_N\le \cE_N.
\end{equation}
We also define the following weighted Sobolev spaces: let $\langle v \rangle = (1+ |v|^2)^{\frac{1}{2}}$,
\begin{equation}\nonumber
L^2_l(\mathbb{R}^3_v) = \{g\in \mathcal{S}'(\mathbb{R}^3_v): \|g\|_{L^2_l(\mathbb{R}^3_v)}= \|\langle v \rangle^l g\|_{L^2(\mathbb{R}^3_v)} < +\infty \}\,.
\end{equation}

\subsection{Main Theorems}
The main theorems are stated in the following. The first theorem is on the global existence of the Boltzmann equation uniform with respect to the Knudsen number $\eps$, and the second is on the incompressible Navier-Stokes limit as $\eps \rightarrow 0$ taken in the solutions $g_\eps$ of the Boltzmann equation \eqref{e2} which is constructed in the first theorem.

\begin{theorem}\label{theorem1}
Assume that the collision kernel $B(\cdot,\cdot)$ satisfies \eqref{kernel form}. It also satisfies for non-cutoff case \eqref{non-cutoff kernel} with $0<s<1, \gamma > \max \{-3, -\frac32-2s\}$, and \eqref{Grad} for cutoff case with $\gamma > -3$. Then for $N\ge 2$ and  $0 < \eps < 1$, there exists a $\delta_0>0$, independent of $\eps$, such that if $\|g_{\varepsilon, 0}\|_{H^N(\RR^3_x; L^2(\RR^3_v))}\le \delta_0$, the Cauchy problem \eqref{e2} admits a global solution
$$
g_\varepsilon\in L^\infty([0, +\infty); H^N(\RR^3_x; L^2(\RR^3_v)))
$$
with the global energy estimate:
\begin{equation}\label{global estimates}
  \sup_{t \geq 0} \cE^2_N(t) + c_0\int^\infty_0 \frac{1}{\eps^2}\cD^2_N(t)\,\mathrm{d}t + c_0\int^\infty_0 \cC^2_N(t)\,\mathrm{d}t \leq \cE^2_N(0)\,,
\end{equation}
here $c_0>0$ is independent of $\eps$.
\end{theorem}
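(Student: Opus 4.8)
The plan is to establish \eqref{global estimates} by a nonlinear energy method coupled to a continuity argument, keeping careful track of every power of $\eps$ so that the smallness threshold $\delta_0$ does not degenerate as $\eps\to0$. Granting local-in-time existence, I would work under the a priori hypothesis $\sup_{[0,T]}\cE_N(t)\le\delta$ with $\delta$ small and independent of $\eps$, and close it by improving the constant; the global bound \eqref{global estimates} then follows by the standard bootstrap.

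First I would run the pure energy estimate: apply $\p^\alpha_x$ to \eqref{e2} for $|\alpha|\le N$, pair with $\p^\alpha_x g_\eps$ in $L^2(\RR^6_{x,v})$, and sum. The streaming term $\frac1\eps(v\cdot\grad\p^\alpha_x g_\eps,\p^\alpha_x g_\eps)$ vanishes by skew-symmetry, and the coercivity of $\cL$ on $\cN^\perp$ measured in the non-isotropic norm $|\!|\!|\cdot|\!|\!|$ --- available in both the non-cutoff and cutoff regimes from [AMUXY] and \cite{G-S} --- furnishes the microscopic dissipation $\frac{c_0}{\eps^2}\cD_N^2(g_\eps)$. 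Since $\Gamma(g,g)\in\cN^\perp$, only the kinetic part $g_{\eps,2}$ survives against the collision term, so that
\begin{equation}\nonumber
\frac12\frac{d}{dt}\cE_N^2(g_\eps)+\frac{c_0}{\eps^2}\cD_N^2(g_\eps)\le \frac1\eps\sum_{|\alpha|\le N}\big|(\p^\alpha_x\Gamma(g_\eps,g_\eps),\,\p^\alpha_x g_{\eps,2})_{L^2(\RR^6_{x,v})}\big|.
\end{equation}
For the right-hand side I would invoke the trilinear estimate for $\Gamma$ controlled by $|\!|\!|\cdot|\!|\!|$, together with Sobolev product rules, bounding it by $\frac{C}{\eps}\cE_N\cD_N^2$ up to macro-micro cross terms. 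The decisive, and pleasantly favorable, point is that
\begin{equation}\nonumber
\frac1\eps\int_0^T\cE_N(g_\eps)\,\cD_N^2(g_\eps)\,dt\le\Big(\sup_{[0,T]}\cE_N(g_\eps)\Big)\,\eps\cdot\frac1{\eps^2}\int_0^T\cD_N^2(g_\eps)\,dt,
\end{equation}
which is valid because $\eps<1$; hence the nonlinearity is swallowed by the dissipation once $\cE_N$ is small, with a threshold independent of $\eps$. It is exactly the mismatch between the $\eps^{-2}$ damping and the $\eps^{-1}$ nonlinearity that renders $\delta_0$ uniform.

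The main obstacle is the order-one macroscopic dissipation $\cC_N^2=\|\grad\cA(g_\eps)\|^2_{H^{N-1}}$, to which $\cL$ is blind. I would project \eqref{e2} onto $\cN$ to obtain the local conservation laws for $\cA(g_\eps)=(a,b,c)$ --- a symmetrizable hyperbolic system whose gradients carry a singular $\frac1\eps$ and which is forced by moments of $g_{\eps,2}$ --- and onto $\cN^\perp$ for the equation of $g_{\eps,2}$. Reading the Chapman-Enskog relation $g_{\eps,2}\approx-\eps\,\cL^{-1}(\iI-\pP)(v\cdot\grad g_{\eps,1})$ off the latter is what converts the fast $\eps^{-2}$ damping of $g_{\eps,2}$ into an order-one dissipation for $\grad\cA$. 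Concretely I would build a Kawashima-type interaction functional $\mathcal{G}_N(t)$, bilinear in $\cA(g_\eps)$ and in suitable moments of $g_{\eps,2}$ and weighted by the appropriate power of $\eps$ so that $|\mathcal{G}_N|\lesssim\cE_N^2$, and show
\begin{equation}\nonumber
\frac{d}{dt}\mathcal{G}_N(t)+c\,\cC_N^2(g_\eps)\le \frac{C}{\eps^2}\cD_N^2(g_\eps)+(\text{nonlinear terms}).
\end{equation}
Carrying out this computation with all $\eps$-powers balanced, and uniformly across the admissible ranges of $\gamma$ and $s$, is the delicate heart of the argument.

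Finally I would form the Lyapunov functional $\cE_N^2(g_\eps)+\lambda\,\mathcal{G}_N$ with $\lambda$ small, so that it remains equivalent to $\cE_N^2(g_\eps)$, and add the two inequalities. The term $\frac{\lambda C}{\eps^2}\cD_N^2$ produced by $\mathcal{G}_N$ is absorbed by the microscopic dissipation $\frac{c_0}{\eps^2}\cD_N^2$ for $\lambda$ small, while $\lambda c\,\cC_N^2$ supplies the macroscopic dissipation, and the remaining nonlinear contributions are absorbed by smallness as in the second step. This yields
\begin{equation}\nonumber
\frac{d}{dt}\Big(\cE_N^2(g_\eps)+\lambda\,\mathcal{G}_N\Big)+c_0\Big(\frac1{\eps^2}\cD_N^2(g_\eps)+\cC_N^2(g_\eps)\Big)\le0,
\end{equation}
and integrating in time gives \eqref{global estimates} while simultaneously improving the a priori bound, thereby closing the bootstrap and producing the global solution $g_\eps\in L^\infty([0,+\infty);H^N(\RR^3_x;L^2(\RR^3_v)))$.
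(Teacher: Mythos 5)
Your proposal is correct and follows essentially the same route as the paper: the microscopic estimate via coercivity of $\mathcal{L}$ in the non-isotropic norm plus the trilinear bound, the crucial observation that the $\eps^{-1}$ nonlinearity is dominated by the $\eps^{-2}$ dissipation uniformly in $\eps$, and an $\eps$-weighted Kawashima-type interaction functional (the paper's $d_1\eps\sum_{|\alpha|\le N-1}[(\p^\alpha r,\nabla_x\p^\alpha(a,-b,c))+(\p^\alpha b,\nabla_x\p^\alpha a)]$ built from the 13-moment system) to recover the macroscopic dissipation $\cC_N^2$, all closed by a continuation argument. The only presentational difference is that the paper derives the macroscopic inequality component by component for $a$, $b$, $c$ rather than invoking the Chapman--Enskog heuristic, but the functional and the $\eps$-bookkeeping are exactly as you describe.
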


The next theorem is about the limit to the incompressible Navier-Stokes-Fourier equation:
\begin{equation}\label{INS}
\begin{cases}
   \p_t \mathrm{u} + \mathrm{u}\!\cdot\! \grad \mathrm{u} + \grad p = \nu \Delta_{\!x}\mathrm{u}\,,\\
   \grad\!\cdot\! \mathrm{u}=0\,,\\
   \p_t \theta + \mathrm{u}\!\cdot\! \grad \theta = \kappa \Delta_{\!x}\theta\,,
\end{cases}
\end{equation}
where the viscosity and heat conductivity are given by
$$\nu=\tfrac{1}{15}(\sqrt{\mu}A_{ij}, \sqrt{\mu}\widehat{A}_{ij})_{L^2(\mathbb{R}^3_v)}\,,\quad \kappa =\tfrac{2}{15}(\sqrt{\mu}B_i, \sqrt{\mu}\widehat{B}_i)_{L^2(\mathbb{R}^3_v)}
$$respectively. Here $A_{ij}=v_i v_j - \frac{|v|^2}{3}, B_i = v_i(\frac{|v|^2}{2}-\frac{3}{2})$, and $\mathcal{L}\widehat{A}_{ij}= A_{ij}, \mathcal{L}\widehat{B}_i= B_i$.

\begin{theorem}\label{theorem2}
Let the collision kernel satisfy the same assumption as in Theorem \ref{theorem1}. Let $0<\eps<1$, $N \geq 2$ and $\delta_0 > 0$ be as in the Theorem \ref{theorem1}. For any $(\rho_0, u_0, \theta_0)\in H^N(\mathbb{R}^3_x)$ with $\|(\rho_0, u_0, \theta_0)\|_{H^N(\mathbb{R}^3_x)} < \frac{\delta_0}{2}$, and $\tilde{g}_{\eps,0} \in \mathcal{N}^\perp$ with $\|\tilde{g}_{\eps,0}\|_{H^N(\mathbb{R}^3_x;L^2(\mathbb{R}^3_v))}< \frac{\delta_0}{2}$, let
\begin{equation}\label{initial data}
  g_{\eps,0}(x,v) = \{\rho_0(x) + \mathrm{u}_0(x)\cdot v+ \theta_0(x)(\tfrac{|v|^2}{2}-\tfrac{3}{2})\}\sqrt{\mu} + \tilde{g}_{\eps,0}(x,v)\,.
\end{equation}
Let $g_\eps$ be the family of solutions to the Boltzmann equation \eqref{e2} constructed in Theorem \ref{theorem1}. Then,
\begin{equation}
   g_\eps \rightarrow \mathrm{u}\!\cdot\!v+ \theta(\tfrac{|v|^2}{2}-\tfrac{5}{2}) \quad \text{as}\quad\! \eps\rightarrow 0\,,
\end{equation}
where the convergence is weak$\mbox{-}\star$ for $t$, strongly in $H^{N-\eta}(\mathbb{R}^3_x)$ for any $\eta>0$, and weakly in $L^2(\mathbb{R}^3_v)$, and $(u,\theta) \in C([0,\infty);H^{N-1}(\mathbb{R}^3_x))\cap L^\infty([0,\infty);H^{N}(\mathbb{R}^3_x))$ is the solution of the incompressible Navier-Stokes-Fourier equation \eqref{INS} with initial data:
\begin{equation}\label{IC}
   \mathrm{u}|_{t=0}= \mathcal{P}\mathrm{u}_0(x)\,, \quad \theta|_{t=0}= \tfrac{3}{5}\theta_0(x)-\tfrac{2}{5}\rho_0(x)\,,
\end{equation}
where $\mathcal{P}$ is the Leray projection. Furthermore, the convergence of the moments holds: as $\eps\rightarrow 0$,
\begin{equation}
\begin{aligned}
   &\mathcal{P}(g_\eps, v\sqrt{\mu})_{L^2(\mathbb{R}^3_v)} \rightarrow \mathrm{u} \quad \text{in}\quad\! C([0,\infty);H^{N-1-\eta}(\mathbb{R}^3_x))\cap L^\infty([0,\infty);H^{N-\eta}(\mathbb{R}^3_x))\,,\\
   &(g_\eps, (\tfrac{|v|^2}{5}-1)\sqrt{\mu})_{L^2(\mathbb{R}^3_v)} \rightarrow \theta \quad \text{in}\quad\! C([0,\infty);H^{N-1-\eta}(\mathbb{R}^3_x))\cap L^\infty([0,\infty);H^{N-\eta}(\mathbb{R}^3_x))\,,
\end{aligned}
\end{equation}
 for any $\eta>0$.
\end{theorem}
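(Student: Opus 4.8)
The plan is to treat Theorem~\ref{theorem2} as a compactness-plus-moment-closure argument resting on the uniform estimates of Theorem~\ref{theorem1}. First I would read off from \eqref{global estimates} that $g_\eps$ is bounded in $L^\infty([0,\infty);H^N(\RR^3_x;L^2(\RR^3_v)))$ uniformly in $\eps$, and that $\int_0^\infty\eps^{-2}\cD_N^2(t)\,\mathrm{d}t\le\cE_N^2(0)$ forces $\|g_{2,\eps}\|_{\cX^N}=O(\eps)$ in $L^2_t$, so that the microscopic part $g_{2,\eps}=(\iI-\pP)g_\eps$ tends to $0$. Extracting a weak-$\star$ limit $g_\eps\rightharpoonup g$ in $L^\infty_tH^N_xL^2_v$, the vanishing of the microscopic part forces $g\in\cN$, hence $g=\{a+v\cdot b+|v|^2c\}\sqrt\mu$ with $\cA(g)=(a,b,c)$ the limit of $\cA(g_\eps)$.

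Next I would extract the hydrodynamic constraints. Testing \eqref{e2} against the collision invariants $\sqrt\mu,v\sqrt\mu,|v|^2\sqrt\mu$, which lie in $\cN$ and hence annihilate both $\cL$ and $\Gamma$, gives the local conservation laws; the mass law $\p_t(g_\eps,\sqrt\mu)+\tfrac1\eps\grad\!\cdot\!(g_\eps,v\sqrt\mu)=0$, multiplied by $\eps$, yields $\grad\!\cdot\!\mathrm u=0$ in the limit. The momentum law $\p_t(g_\eps,v\sqrt\mu)+\tfrac1\eps\grad\!\cdot\!(g_\eps,v\otimes v\sqrt\mu)=0$ has $O(1)$ flux $(g_{1,\eps},v\otimes v\sqrt\mu)=(\rho_\eps+\theta_\eps)\,\mathrm{Id}$ by a direct Gaussian computation, so multiplying by $\eps$ and letting $\eps\to0$ produces the Boussinesq relation $\grad(\rho+\theta)=0$; since $(\rho,\theta)\in H^N(\RR^3_x)$, this forces $\rho+\theta=0$. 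Substituting $\rho=-\theta$ collapses $g$ to $\{\mathrm u\cdot v+\theta(\tfrac{|v|^2}2-\tfrac52)\}\sqrt\mu$, the asserted limit, and the identities $(g,v\sqrt\mu)=\mathrm u$, $(g,(\tfrac{|v|^2}5-1)\sqrt\mu)=\theta$ follow by integration.

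To close the evolution I would represent the microscopic part from the equation itself: multiplying \eqref{e2} by $\eps^2$ and applying $(\iI-\pP)$ gives $\cL g_{2,\eps}=-\eps(\iI-\pP)(v\cdot\grad g_1)+\eps\,\Gamma(g_\eps,g_\eps)+O(\eps^2)$, whence $\eps^{-1}g_{2,\eps}\to-\cL^{-1}(\iI-\pP)(v\cdot\grad g_1)+\cL^{-1}\Gamma(g_1,g_1)$. Pairing with $A_{ij}\sqrt\mu$ and using $\cL\widehat A_{ij}=A_{ij}$ with the self-adjointness of $\cL^{-1}$, the first term produces the Newtonian stress $-\nu(\grad\mathrm u+(\grad\mathrm u)^{\!\top})$ with $\nu=\tfrac1{15}(\sqrt\mu A_{ij},\sqrt\mu\widehat A_{ij})$, while $(\widehat A_{ij},\Gamma(g_1,g_1))$ produces the Reynolds stress proportional to $\mathrm u\otimes\mathrm u$; pairing instead with $B_i\sqrt\mu$ and using $\cL\widehat B_i=B_i$ yields the Fourier flux $-\kappa\grad\theta$ with $\kappa=\tfrac2{15}(\sqrt\mu B_i,\sqrt\mu\widehat B_i)$ and the convective flux $\mathrm u\theta$. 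Inserting these $O(\eps)$ fluxes into $\tfrac1\eps\grad\!\cdot\!(\,\cdot\,)$ in the momentum and energy laws, applying the Leray projection $\cP$ to discard the $\tfrac1\eps\grad(\rho+\theta)$ pressure gradient, and using $\grad\!\cdot\!\mathrm u=0$ delivers exactly \eqref{INS}.

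The main obstacle is that the nonlinear fluxes demand \emph{strong} convergence of $\cP(g_\eps,v\sqrt\mu)$ and $(g_\eps,(\tfrac{|v|^2}5-1)\sqrt\mu)$, not merely the weak-$\star$ convergence supplied directly by \eqref{global estimates}. I would combine the uniform $H^N_x$ regularity with uniform bounds on the time derivatives of these projected moments, read off from the conservation laws, and invoke an Aubin--Lions--Simon argument to upgrade to strong convergence in $C([0,\infty);H^{N-\eta}_x)$ for every $\eta>0$, giving the stated convergence (weak-$\star$ in $t$, strong in $H^{N-\eta}_x$, weak in $L^2_v$). The genuinely delicate point is the fast acoustic oscillation carried by the $\tfrac1\eps$-terms: the acoustic dynamics $\p_\tau\rho=-\grad\!\cdot\!\mathrm u$, $\p_\tau\theta=-\tfrac23\grad\!\cdot\!\mathrm u$ conserves precisely $\cP\mathrm u$ and the combination $\tfrac35\theta-\tfrac25\rho=(g_\eps,(\tfrac{|v|^2}5-1)\sqrt\mu)$, so the fast modes lie in the curl-free pressure sector, are annihilated by $\cP$, and do not pollute the weak limit of the quadratic terms, exactly as in the Bardos--Golse--Levermore framework. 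Evaluating these two conserved slow quantities at $t=0$ on \eqref{initial data} gives $\mathrm u|_{t=0}=\cP\mathrm u_0$ and $\theta|_{t=0}=\tfrac35\theta_0-\tfrac25\rho_0$, i.e. \eqref{IC}; finally, since \eqref{INS} has a unique solution in $C([0,\infty);H^{N-1}_x)\cap L^\infty([0,\infty);H^N_x)$, the limit is independent of the subsequence and the whole family $g_\eps$ converges.
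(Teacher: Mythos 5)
Your proposal is correct and follows essentially the same route as the paper: uniform bounds from Theorem \ref{theorem1} give weak compactness and force the limit into $\mathcal{N}$, the local conservation laws yield incompressibility and the Boussinesq relation, strong convergence of the slow modes $\mathcal{P}\mathrm{u}_\eps$ and $\tfrac35\theta_\eps-\tfrac25\rho_\eps$ is obtained from equicontinuity in time (the paper uses Arzel\`a--Ascoli where you invoke Aubin--Lions, which is the same mechanism), and the fluxes are closed by the standard $\mathcal{L}^{-1}$ moment computation, with the acoustic cross terms handled exactly as in the Bardos--Golse--Levermore framework that the paper also cites. No substantive difference in approach or gaps worth noting.
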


\subsection{Historical Remarks}

The fluid limits from the Boltzmann equations have been gotten a lot of interest in the previous decades. The main contributions are the rigorous justifications of the incompressible Navier-Stokes and Euler equations. There are basically two directions based on the two different contexts of the solutions to the Boltzmann equations: the first, in the context of DiPerna-Lions renormalized solutions; the second, the classical solutions.

For the first direction, after DiPerna-Lions's renormalized solution to the Boltzmann equation with Grad's cutoff kernel \cite{D-L}, which are the only solutions known to exist globally without any restriction on the size of the initial data. See also the extension to the non-cutoff kernels by Alexandre-Villani \cite{al-3}. From late 80's, Bardos-Golse-Levermore initialized the program (BGL Program in brief) to justify Leray's solutions to the incompressible Navier-Stokes equations from DiPerna-Lions' renormalized solutions \cite{BGL1}, \cite{BGL2}. They proved the first convergence result with 5 additional technical assumptions. After 10 years effects by Bardos, Golse, Levermore, Lions and Saint-Raymond, see \cite{BGL3},\cite{LM3},\cite{LM4},\cite{GL}, the first complete convergence result without any additional compactness assumption was proved by Golse and Saint-Raymond in \cite{Go-Sai04} for cutoff Maxwell collision kernel, and in \cite{Go-Sai09} for hard cutoff potentials. Later on, it was extended by Levermore-Masmoudi \cite{LM} to include soft potentials. Recently Arsenio got the similar results for non-cutoff case \cite{Arsenio}.

The BGL program says that, given any $L^2\mbox{-}$bounded functions $(\rho_0, u_0, \theta_0)$, and for any physically bounded initial data (as required in DiPerna-Lions solutions) $F_{\eps,0}= \mu + \eps \sqrt{\mu}g_{\eps,0}$, such that suitable moments of the fluctuation $g_{\eps,0}$, say, $(\mathcal{P}(g_{\eps,0}, v\sqrt{\mu})_{L^2(\mathbb{R}^3_v)}, (g_{\eps,0}, (\tfrac{|v|^2}{5}-1)\sqrt{\mu})_{L^2(\mathbb{R}^3_v)})$ converges in the sense of distributions to $(\mathcal{P}u_0, \tfrac{3}{5}\theta_0-\tfrac{2}{5}\rho_0)$, the corresponding DiPerna-Lions solutions are $F_\eps(t,x,v)= \mu + \eps \sqrt{\mu}g_{\eps}(t,x,v)$. Then the fluctuations $g_{\eps}$ has weak compactness, such that the corresponding moments of $g_\eps$ converge weakly in $L^1$ to $(u,\theta)$ which is a Leray solution to the incompressible Navier-Stokes equation whose viscosity and heat conductivity coefficients are determined by microscopic information, with initial data $(\mathcal{P}u_0, \tfrac{3}{5}\theta_0-\tfrac{2}{5}\rho_0)$. Under some situations, for example the well-prepared initial data or in bounded domain with suitable boundary condition, the convergence could be strong $L^1$.

We emphasize that the BGL program indeed gave a new proof of Leray's solutions to the incompressible Navier-Stokes equation, in particular the energy inequality of which can be derived from the entropy inequality of the Boltzmann equation. Any a priori information of the Navier-Stokes equation is not needed, and completely derived from the microscopic Boltzmann equation. In this sense, BGL program is spiritually a part of Hilbert's 6th problem: derive and justify the macroscopic fluid equation from the microscopic kinetic equations.

The second direction on the fluid limits of Boltzmann equations is based on the Hilbert expansion  and in the context of classical solutions. It was started from Nishida and Caflisch's work on the compressible Euler limit \cite{Nishida}, \cite{Caflisch}, \cite{KMN}. After then this process was used in justifications for the incompressible limits, for examples, \cite{DEL-89} and \cite{GY-06}. In \cite{DEL-89}, De Masi-Esposito-Lebowitz considered  Navier-Stokes limit in dimension 2. More recently, using the nonlinear energy method, in \cite{GY-06} Y. Guo justified the Navier-Stokes limit (and beyond, i.e. higher order terms in Hilbert expansion). These results basically say that, given the initial data which is needed in the classical solutions of the Navier-Stokes equation, it can be constructed the solutions of the Boltzmann equation of the form $F_\eps = \mu + \eps \sqrt{\mu}(g_1+ \eps g_2 + \cdots + \eps^n g_\eps)$, where $g_1, g_2, \cdots $ can be determined by the Hilbert expansion, and $g_\eps$ is the error term. In particular, the first order fluctuation $g_1 = \rho_1 + u_1\!\cdot\! v + \theta_1(\frac{|v|^2}{2}-\frac{3}{2})$, where $(\rho_1, u_1, \theta_1)$ is the solutions to the incompressible Navier-Stokes equations.

Besides the mathematical techniques are quite different with the BGL program, (the BGL program uses entropy and weak compactness methods, and the second direction use energy method), philosophically, as mentioned above, BGL program does not assume any a priori information of the fluid equation, and derives the fluid equation from solutions of the Boltzmann equation, and consequently gives a solution to the fluid equation. The second direction go the opposite direction, say, they employ the solutions of the fluid equations, and construct the solutions of the Boltzmann equation {\em near} the infinitesimal Maxwellian $g_1$ which is determined by the solutions of the fluid equations (while the higher order term $g_i, i \geq 2$ are determined by linear fluid equations). In other words, if $g_1, g_2, \cdots $ are given by the solutions of the fluid equations, when the Knudsen number $\eps$ small enough, one can construct solutions of the Boltzmann equation of the form $F_\eps = \mu + \eps \sqrt{\mu}(g_1+ \eps g_2 + \cdots + \eps^n g_\eps)$.

The main purpose of present work is trying to study the fluid dynamic limits of the Boltzmann equation along the philosophy of the first direction in the context of classical solutions. The first work in this problem is Bardos-Ukai\cite{b-u}. They started from the scaled Boltzmann equation \eqref{e2} for cut-off hard potentials, and proved the global existence of classical solutions $g_\eps$ uniformly in $0< \eps <1$. The key feature of Bardos-Ukai's work is that they only need the smallness of the initial data, and did not assume the smallness of the Knudsen number $\eps$. After having the uniform in $\eps$ solutions $g_\eps$, taking limits can provide a classical solution of the incompressible Navier-Stokes equation with small initial data.

Bardos-Ukai's approach heavily depends on the sharp estimate especially the spectral analysis on the linearized Boltzmann operator $\cL$, and the semigroup method. Methodologically it is a linear method. They only treated the hard potentials case. It seems that it is hardly extended to soft potential cutoff, and even harder for the non-cutoff case, since it is well-known for those cases, the operator $\cL$ has continuous spectrum.

In the present paper, we consider much larger class of collision kernels for both cut-off and non-cutoff cases. We use the nonlinear energy method, in particular the nice properties of the non-isotropic norm defined in \eqref{triple norm}, which was recently developed in the series of works by [AMUXY]. We proved in Theorem \ref{theorem1} the uniform in $\eps$ global existence of the Boltzmann equation with or without cutoff assumption and established the global energy estimates. Then taking limit as $\eps\rightarrow 0$, proved the incompressible Navier-Stokes limit in Theorem \ref{theorem2}. Our result in fact give another proof of the small initial data classical solution to the incompressible Navier-Stokes equation. Furthermore, for the non-cutoff kernels, since the solutions established in [AMUXY] have full regularity, we expect that the solutions to the Navier-Stokes equation will have higher order regularities. This will be discussed in a future paper.

This paper is organized as follows: the next section is devoted to the local existence. In section 3, the uniform energy estimate and the global existence is established. In the final section, the incompressible Navier-Stokes limit is proved.

%%%%%%%%%%%%%%%%%%%%%%%%%%%%%%
\section{Construction of Local Solutions}\label{local-existence}
\smallskip
\setcounter{equation}{0}

\subsection{Preparations}

For the convenience, we collect some know results about the collision operators. In the rest of the paper, we use the notation $a\lesssim b$ which means that there exists a generic constant (independent of $\eps$) $C$ such that $ a \leq C b $.
\begin{proposition}
The following estimates holds:
\begin{itemize}
\item For any $\gamma>-3, 0<s<1$, there exists two generic constants $C_1, C_2 >0$ such that
\begin{equation}\label{equivalent}
C_1 \big\{ \|g\|^2_{H^s_{\gamma/2}(\mathbb{R}^3_v)}+\|g\|^2_{L^2_{s+\gamma/2}(\mathbb{R}^3_v)} \big\}
\leq |\!|\!| g|\!|\!|^2 \leq C_2 \|g\|^2_{H^s_{s+\gamma/2}(\mathbb{R}^3_v)},
\end{equation}
and
\begin{equation}\label{coercive}
C_1|\!|\!|(\iI-\pP)g|\!|\!|^2 \leq (\mathcal{L} g, g)_{L^2(\RR^3_v)} \leq C_2 |\!|\!|g|\!|\!|^2.
\end{equation}

\item  For any $0<s<1$ and  $\gamma > \max \{-3, -3/2-2s\}$, there exists $C > 0$ such that
\begin{equation}\label{trilinear}
\Big |(\Gamma (f,g),h )_{L^2(\RR^3_v)}\Big| \leq C \|f\|_{L^2(\mathbb{R}^3_v)} |\!|\!|g|\!|\!|\,\, |\!|\!|h|\!|\!|,
\end{equation}
and
\begin{align}\label{trilinear-b}
\Big |(\Gamma (f,g),h )_{L^2(\RR^3_v)}\Big| & \leq C \Big\{\|f\|_{L^2_{s+\gamma/2}{(\mathbb{R}^3_v)}} |\!|\!|g|\!|\!|+\|g\|_{L^2_{s+\gamma/2}{(\mathbb{R}^3_v)}} |\!|\!|f|\!|\!|\\
&\quad+\min\{\|f\|_{L^2_{s+\gamma/2}{(\mathbb{R}^3_v)}} \|g\|_{L^2{(\mathbb{R}^3_v)}},\,\,
\|g\|_{L^2_{s+\gamma/2}{(\mathbb{R}^3_v)}} \|f\|_{L^2{(\mathbb{R}^3_v)}}\}\Big\}
|\!|\!|h|\!|\!|\, .\notag
\end{align}
\item For the cutoff case \eqref{Grad}, we have that \eqref{equivalent} holds true with $s=0$ and the trilinear upper bounded estimate \eqref{trilinear} holds true for $\gamma>-3$.
\end{itemize}
\end{proposition}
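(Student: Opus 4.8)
The plan is to establish this proposition not by a fresh argument but by recalling, and where the precise weights differ lightly adapting, the harmonic-analytic estimates developed in the [AMUXY] series \cite{amuxy3, amuxy4-2, amuxy4-3, amuxy7} and in Gressman--Strain \cite{G-S}; none of the three groups of inequalities requires genuinely new input, so I would write the section as a recollection of these facts with explicit pointers to where each is proved. It is nonetheless worth sketching the mechanism behind each, since this is what fixes the role of the non-cutoff restriction $\gamma > \max\{-3,\,-3/2-2s\}$ appearing in \eqref{trilinear}--\eqref{trilinear-b}.

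For the norm equivalence \eqref{equivalent} I would start from the definition \eqref{triple norm} and treat its two pieces separately. The first integral, in which the Gaussian weight $\mu_*^2$ regularizes the kinetic factor $|v-v_*|^\gamma$ after integration in $v_*$, is the genuinely singular one: near $\theta=0$ the angular kernel \eqref{non-cutoff kernel} forces $\int_{\mathbb{S}^2} b(\cos\theta)(g'-g)^2\,d\sigma$ to behave like a fractional derivative of order $s$. The cleanest route is a Fourier-side computation in the spirit of Bobylev's formula, which renders this double integral comparable to a symbol of size $\langle\xi\rangle^{2s}$ up to the velocity weight $\langle v\rangle^{\gamma/2}$ generated by $|v-v_*|^\gamma$; this produces the $H^s_{\gamma/2}$ control. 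The second integral of \eqref{triple norm}, together with the cancellation lemma applied to the first, supplies the pure weight gain $L^2_{s+\gamma/2}$, and the upper bound follows by reversing these steps and absorbing everything into $H^s_{s+\gamma/2}$. The coercivity \eqref{coercive} is then immediate from the self-adjoint, nonnegative structure of $\mathcal{L}$ with null space $\mathcal{N}$: decomposing $\mathcal{L}$ into its loss and gain parts, using that the loss part already reproduces the triple norm while the gain part is relatively compact on $\mathcal{N}^\perp$, one obtains $(\mathcal{L}g,g)\sim |\!|\!|(\iI-\pP)g|\!|\!|^2$, and the upper bound is its Cauchy--Schwarz counterpart.

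The trilinear bounds \eqref{trilinear} and \eqref{trilinear-b} are the delicate part, and I expect them to be the main obstacle. The strategy I would follow is the one in [AMUXY]: pass to the Carleman (dual) representation of $\Gamma$, isolate the factor that carries the angular singularity, and split the $(v,v_*,\sigma)$ integration into a near-grazing region $\theta\lesssim\langle v\rangle^{-1}$ and its complement. On the singular region one commutes the weight $\langle v\rangle^{s+\gamma/2}$ past the collision, pays the order-$s$ smoothing to one factor (producing a $|\!|\!|\cdot|\!|\!|$ on $g$ or $h$), and estimates the remaining factor in $L^2$; on the non-singular region the kernel is integrable and only weights survive. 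The two placements of the weight account for the two symmetric terms in \eqref{trilinear-b}, and the $\min$ there reflects the freedom to assign the gained weight to either $f$ or $g$.

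The constraint $\gamma > -3/2-2s$ enters exactly at this point: it is precisely what guarantees that the weight lost to the singular kinetic factor $|v-v_*|^\gamma$ for soft potentials can be recovered by the fractional smoothing, so that the right-hand sides close in the norms of \eqref{equivalent} without demanding extra velocity moments. For the cutoff case \eqref{Grad} the angular singularity is absent, one sets $s=0$, and these estimates degenerate to the classical Grad-type bilinear bounds, valid for all $\gamma>-3$; I would record this reduction explicitly so that the subsequent energy estimates can quote \eqref{equivalent}--\eqref{trilinear-b} uniformly in both the cutoff and non-cutoff settings.
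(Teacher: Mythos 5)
Your proposal matches the paper's treatment: the paper does not reprove these estimates but simply cites \cite{amuxy4-2} for \eqref{equivalent}, \eqref{coercive}, \eqref{trilinear-b}, \cite{amuxy7} for \eqref{trilinear}, and Theorem 3 of \cite{guo-T} for the cutoff case, which is exactly the recollection-with-pointers strategy you describe. Your mechanistic sketches (Bobylev/Carleman representations, the role of $\gamma>-3/2-2s$) go beyond what the paper records but are consistent with the cited sources, so there is nothing to correct.
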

The estimates \eqref{equivalent}, \eqref{coercive} and \eqref{trilinear-b} were proved in \cite{amuxy4-2}, and \eqref{trilinear} was proved in \cite{amuxy7}. For the cutoff case, the trilinear upper bounded estimate is just Theorem 3 of \cite{guo-T}. {The rest of this manuscript will focus on the more difficult non-cutoff case.}

Next, we prepare some lemmas about the upper bounded estimate. The first is the following Galiardo-Nirenberg type inequality which was proved in \cite{amuxy4-2} (See Lemma 6.1 there.)

\begin{lemm}
Assume that $N \geq 3$ and let $\p^\alpha=\p^\alpha_x, \alpha \in \NN^3, |\alpha| \le N$. Then
\begin{equation}\label{2.1112}
\|\p^\alpha \cA^2\|_{L^2(\RR^3_x)}\lesssim \|\nabla_x \cA
\|_{H^{N-1}(\RR^3_x)}\|\cA\|_{H^{N}(\RR^3_x)},
\end{equation}
\end{lemm}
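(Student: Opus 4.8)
The plan is to expand by the Leibniz rule and control each resulting product by a Gagliardo--Nirenberg interpolation, arranging matters so that at least one factor always carries a full derivative and therefore contributes the gradient norm $\|\nabla_x\cA\|_{H^{N-1}}$. Since $\cA^2$ denotes a generic quadratic expression in the components $(a,b,c)$ of $\cA$, it suffices to bound $\|\p^\alpha(\varphi\psi)\|_{L^2}$ for two scalar components $\varphi,\psi$ of $\cA$ and then absorb the finitely many such products. Writing $m=|\alpha|\le N$, the Leibniz rule gives $\p^\alpha(\varphi\psi)=\sum_{\beta\le\alpha}\binom{\alpha}{\beta}\,\p^\beta\varphi\,\p^{\alpha-\beta}\psi$, so I reduce to estimating each term $\|\p^\beta\varphi\,\p^{\alpha-\beta}\psi\|_{L^2}$.

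For the endpoint terms $\beta=0$ and $\beta=\alpha$ with $m\ge 1$, I use H\"older in the form $L^\infty\times L^2$: $\|\varphi\,\p^\alpha\psi\|_{L^2}\le\|\varphi\|_{L^\infty}\|\p^\alpha\psi\|_{L^2}$, then bound $\|\varphi\|_{L^\infty}\lesssim\|\varphi\|_{H^2}\le\|\cA\|_{H^N}$ by the Sobolev embedding $H^2(\RR^3)\hookrightarrow L^\infty$ (legitimate since $N\ge 3$), while $\|\p^\alpha\psi\|_{L^2}=\|\p^{\alpha'}\nabla_x\psi\|_{L^2}\le\|\nabla_x\cA\|_{H^{N-1}}$ because $|\alpha|-1\le N-1$. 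The degenerate case $m=0$ is handled separately by the three-dimensional Gagliardo--Nirenberg inequality $\|\cA\|_{L^4}\lesssim\|\cA\|_{L^2}^{1/4}\|\nabla_x\cA\|_{L^2}^{3/4}$, which yields $\|\cA^2\|_{L^2}=\|\cA\|_{L^4}^2\lesssim\|\cA\|_{L^2}^{1/2}\|\nabla_x\cA\|_{L^2}^{3/2}$; splitting the right-hand side as $(\|\cA\|_{L^2}^{1/2}\|\nabla_x\cA\|_{L^2}^{1/2})\cdot\|\nabla_x\cA\|_{L^2}$ and using $\|\cA\|_{L^2}^{1/2}\|\nabla_x\cA\|_{L^2}^{1/2}\lesssim\|\cA\|_{H^1}\le\|\cA\|_{H^N}$ gives precisely $\|\nabla_x\cA\|_{H^{N-1}}\|\cA\|_{H^N}$.

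For the intermediate terms, $1\le|\beta|=j\le m-1$, I invoke the interpolation inequality $\|\p^\beta\varphi\|_{L^{2m/j}}\lesssim\|\varphi\|_{L^\infty}^{1-j/m}\|D^m\varphi\|_{L^2}^{j/m}$ (the Gagliardo--Nirenberg inequality underlying the Moser product estimate, whose exponents are dimensionally exact in $\RR^3$), together with the companion bound for $\p^{\alpha-\beta}\psi$ in $L^{2m/(m-j)}$. H\"older with $\tfrac{j}{2m}+\tfrac{m-j}{2m}=\tfrac12$ then collapses these to $\|\p^\beta\varphi\,\p^{\alpha-\beta}\psi\|_{L^2}\lesssim\|\cA\|_{L^\infty}\|D^m\cA\|_{L^2}$. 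As above, $\|\cA\|_{L^\infty}\lesssim\|\cA\|_{H^N}$ and $\|D^m\cA\|_{L^2}\le\|\nabla_x\cA\|_{H^{N-1}}$ for $m\ge 1$, so every term obeys the claimed bound; summing over the finitely many $\beta\le\alpha$ finishes the proof.

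The main obstacle is the \emph{bookkeeping} that guarantees one factor always supplies a genuine gradient, so that the estimate closes with $\|\nabla_x\cA\|_{H^{N-1}}$ rather than the weaker $\|\cA\|_{H^N}^2$: this is automatic once $m\ge 1$, but it forces the separate treatment of the lowest-order case $m=0$, where the gradient must be manufactured through the $L^4$ Gagliardo--Nirenberg inequality and a careful rebalancing of the powers. The hypothesis $N\ge 3$ enters precisely to guarantee the embedding $H^{N}(\RR^3)\hookrightarrow L^\infty$ and the admissibility of the interpolation exponents $2m/j$ used above.
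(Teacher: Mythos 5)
Your proof is correct. Note that the paper itself does not prove this lemma at all: it is quoted from Lemma~6.1 of the reference [AMUXY, \emph{J. Funct. Anal.} 262 (2012)], so there is no in-paper argument to compare against; your write-up supplies the standard self-contained derivation (Leibniz expansion, the endpoint terms via $H^2(\RR^3)\hookrightarrow L^\infty$, the intermediate terms via the Gagliardo--Nirenberg interpolation $\|D^j u\|_{L^{2m/j}}\lesssim\|u\|_{L^\infty}^{1-j/m}\|D^m u\|_{L^2}^{j/m}$ underlying the Moser product estimate, and the $|\alpha|=0$ case via $\|\cA\|_{L^4}\lesssim\|\cA\|_{L^2}^{1/4}\|\nabla_x\cA\|_{L^2}^{3/4}$), which is exactly the expected route and closes with the gradient factor as required. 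Two minor remarks: the choice $\theta=j/m<1$ keeps you clear of the exceptional case of Nirenberg's theorem, which you could state explicitly; and your closing claim that $N\ge 3$ is ``precisely'' what is needed is slightly off --- the embedding $H^2(\RR^3)\hookrightarrow L^\infty$ and your interpolation argument already work for $N\ge 2$, which is consistent with the fact that the paper subsequently applies the inequality under the weaker hypothesis $N\ge 2$ in its Lemma on the trilinear estimate.
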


The next is an estimate on the nonlinear collision operator $\Gamma$ in terms of temporal energy functional and dissipation rate.

\begin{lemma}\label{triple-estimate}
Under the assumption of Theorem \ref{theorem1}, we have, for any $N\ge 2$,
\begin{equation}\label{upper-B}
\big(\Gamma(g, g),\,\, h\big)_{H^N(\RR^3_x; L^2(\RR^3_v))}\lesssim \cE_N(g) \{\cC_N(g)+\cD_N(g)\}\cD_N(h).
\end{equation}
\end{lemma}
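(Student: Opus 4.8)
The plan is to expand the $H^N_x L^2_v$ pairing by Leibniz, exploit that $\Gamma$ takes values in $\cN^\perp$ so that only the kinetic part of the test function survives, decompose $g=g_1+g_2$, and estimate the four resulting bilinear pieces with the trilinear bounds \eqref{trilinear}--\eqref{trilinear-b}, routing the dissipative norms carefully. First I would write
\[
\big(\Gamma(g,g),h\big)_{H^N(\RR^3_x;L^2(\RR^3_v))}=\sum_{|\alpha|\le N}\sum_{\beta\le\alpha}\binom{\alpha}{\beta}\int_{\RR^3_x}\big(\Gamma(\p^\beta_x g,\p^{\alpha-\beta}_x g),\p^\alpha_x h\big)_{L^2(\RR^3_v)}\,\mathrm{d}x,
\]
using that $\Gamma$ acts only in $v$. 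The crucial reduction is that $\Gamma(f,\tilde f)\in\cN^\perp$ for all $f,\tilde f$ (the collision invariants annihilate $\mathcal{Q}$), and that $\pP$ commutes with $\p_x$; hence one may replace $\p^\alpha_x h$ by $\p^\alpha_x h_2=(\iI-\pP)\p^\alpha_x h$. This is what will ultimately produce the factor $\cD_N(h)=\|h_2\|_{\cX^N}$: every application of the trilinear estimate yields $|\!|\!|\p^\alpha_x h_2|\!|\!|$, and $\sum_{|\alpha|\le N}\int_{\RR^3_x}|\!|\!|\p^\alpha_x h_2|\!|\!|^2\,\mathrm{d}x=\cD_N(h)^2$.

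Next I would treat the purely macroscopic piece $\Gamma(g_1,g_1)$ by its explicit structure rather than the generic bound. Writing $g_1=\{a+v\cdot b+|v|^2c\}\sqrt\mu$ one has $\Gamma(g_1,g_1)(x,v)=\sum_{k,l}(\cA_k\cA_l)(x)\,\Gamma(e_k\sqrt\mu,e_l\sqrt\mu)(v)$ with fixed profiles $e_k\in\{1,v_i,|v|^2\}$, so the $v$-pairings are bounded by constants times $|\!|\!|\p^\alpha_x h_2|\!|\!|$ and the $x$-factor by $\|\p^\alpha_x(\cA\otimes\cA)\|_{L^2_x}\lesssim\|\nabla_x\cA\|_{H^{N-1}}\|\cA\|_{H^N}=\cC_N(g)\,\|\cA(g)\|_{H^N}$ via the Gagliardo--Nirenberg inequality \eqref{2.1112}. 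Since $\|\cA(g)\|_{H^N}\lesssim\cE_N(g)$, this contributes $\cE_N\cC_N\cD_N(h)$. The essential mechanism here is that the quadratic term always carries at least one $x$-derivative, which is exactly what converts the non-dissipative $\|\cA\|_{L^2}$ into the dissipation $\cC_N$.

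For the mixed and purely kinetic pieces I would apply \eqref{trilinear} pointwise in $x$, then Hölder in $x$, splitting derivatives so that the low-order factor goes into $L^\infty_x$ (by $H^2(\RR^3)\hookrightarrow L^\infty$) and the high-order one into $L^2_x$. In $\Gamma(g_1,g_2)$ and $\Gamma(g_2,g_2)$ the second argument always supplies a triple norm $|\!|\!|\p^\bullet_x g_2|\!|\!|$ bounded by $\cD_N(g)$, while the remaining factor (namely $g_1$ or $g_2$ in plain $L^2_v$) is absorbed into $\cE_N(g)$, so these terms are routine. The one delicate configuration is $\Gamma(g_2,g_1)$ with all derivatives on $g_2$ and none on $g_1$: then the macroscopic factor carries no gradient, so $|\!|\!|g_1|\!|\!|$ cannot be charged to $\cC_N$, and this I expect to be the main obstacle. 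The remedy is to invoke the refined bound \eqref{trilinear-b}, whose structure (for instance the term $\|\p^\alpha_x g_2\|_{L^2_{s+\gamma/2}}\le|\!|\!|\p^\alpha_x g_2|\!|\!|$ from \eqref{equivalent}) places the dissipative norm on $g_2$, giving $\cD_N(g)$, while the zero-derivative macroscopic factor $\|g_1\|_{L^2_{s+\gamma/2}}\lesssim\|\cA(g)\|_{L^\infty_x}\lesssim\cE_N(g)$ is harmless; when instead $g_1$ carries at least one derivative, $|\!|\!|\p^{\alpha-\beta}_x g_1|\!|\!|$ with $|\alpha-\beta|\ge1$ is controlled directly by $\cC_N(g)$.

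Collecting the four pieces, summing over $\alpha,\beta$ and applying Cauchy--Schwarz in $x$ then yields the claimed bound $\cE_N(g)\{\cC_N(g)+\cD_N(g)\}\cD_N(h)$. A minor technical point is the endpoint $N=2$, where $H^2\hookrightarrow L^\infty$ and \eqref{2.1112} are used at the borderline and the intermediate derivative splittings must be carried out by Gagliardo--Nirenberg interpolation in $\RR^3$ (e.g. $L^4_x$--$L^4_x$ pairings) rather than a crude $L^\infty_x$--$L^2_x$ Hölder estimate.
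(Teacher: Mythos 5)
Your proposal is correct and follows essentially the same route as the paper: project onto $h_2$ using $\Gamma\in\cN^\perp$, split $g=g_1+g_2$ into four bilinear pieces, treat the purely macroscopic term via the explicit $\cA\otimes\cA$ structure and the Gagliardo--Nirenberg inequality \eqref{2.1112} to produce $\cC_N$, and handle the delicate case of $\Gamma(g_2,g_1)$ with all derivatives on $g_2$ by combining \eqref{trilinear-b} with \eqref{equivalent} --- exactly the paper's treatment of its term $J^{21}$ with $\alpha_2=0$. The only difference is that you are slightly more explicit about the borderline Sobolev splittings at $N=2$, which the paper leaves implicit.
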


\begin{proof}
In the following, we fix an index $|\alpha|\le N$, choose any indices $\alpha_1$ and $\alpha_2$ such that $\alpha_1 +\alpha_2 =\alpha$, and fix any
 $\varphi_k$, $\varphi_m$ in $\cN$.
Note that $(\p_x^{\alpha}\Gamma(g,\,g), \p_x^{\alpha}h_1)_{L^2(\RR^6)}=0$, we have
 \begin{equation}
(\p_x^{\alpha}\Gamma(g,\, g), \p_x^{\alpha} h)_{L^2(\RR^6)}
=(\p_x^{\alpha}\Gamma(g,\,g), \p_x^{\alpha}h_2)_{L^2(\RR^6)}
 =J^{11}+J^{12}+J^{21}+J^{22},
\end{equation}
where
\begin{equation}
J^{ij}=(\p_x^{\alpha}\Gamma(g_i,\, g_j), \p_x^{\alpha} h_2)_{L^2(\RR^6)}.
\end{equation}

\noindent
{\bf Estimation of $J^{11}$.}
We shall estimate, for $\varphi_k,\varphi_m\in\cN$,
\begin{align*}
&J^{11} \sim
\int_{\RR^3_x}(\p_x^{\alpha}\cA^2(g))
(\Gamma(\varphi_k,\varphi_m), \p_x^{\alpha} h_2)_{L^2(\RR^3_v)}dx.
\end{align*}
Then \eqref{trilinear} yielding
\begin{align*}
\Big|(\Gamma(\varphi_k,\varphi_m), \p_x^{\alpha} h_2)_{L^2(\RR^3_v)}\Big|
&
\lesssim |\!|\!|\p_x^{\alpha} h_2|\!|\!|,
\end{align*}
Now \eqref{2.1112} implies for $|\alpha|\le N$ and $N\ge 2$,
\begin{align*}
|J^{11}|&
\lesssim \|\p^\alpha \cA^2(g)\|_{L^2(\RR^3_x)}
\| h_2\|_{\cX^N(\RR^6)}
\\
&\lesssim
\|\cA(g)\|_{H^{N}(\RR^3_x)}\|\nabla_x\cA(g)\|_{H^{N-1}(\RR^3_x)}
\|h_2\|_{\cX^{N}(\RR^6)}\\
&\lesssim
\cE_N(g)\cC_N(g)\cD_N(h).
\end{align*}

\noindent
{\bf Estimation of $J^{12}$:}
Notice
\begin{align*}
 J^{12}&\sim \int_{\RR^3_x}(\p_x^{\alpha_1}\cA(g))
(\Gamma(\varphi_k, \p_x^{\alpha_2}g_2), \p_x^{\alpha} h_2)_{L^2(\RR^3_v)}dx.
\end{align*}
Again, \eqref{trilinear}, yielding
  \begin{align*}
| J^{12}|&
 \lesssim \int_{\RR^3_x}|\p_x^{\alpha_1}\cA(g)|\,\,
|\!|\!|\p_x^{\alpha_2} g_2|\!|\!|\,\,
 \ |\!|\!|\p_x^{\alpha} h_2|\!|\!|dx\, ,
\end{align*}
the Sobolev embedding theorem gives
\begin{align*}
| J^{12}|&\lesssim
\|\cA(g)\|_{H^N(\RR^3)}\|g_2\|_{\cX^{N}(\RR^6)}
\|h_2\|_{\cX^{N}(\RR^6)}
\lesssim \cE_N(g)\cD_N(g)\cD_N(h).
\end{align*}

\noindent
{\bf Estimation of $J^{21}$:}
\begin{align*}
 J^{21}&\sim \int_{\RR^3_x}(\p_x^{\alpha_2}\cA(g))
(\Gamma(\p_x^{\alpha_1}g_2, \varphi_k), \p_x^{\alpha} h_2)_{L^2(\RR^3_v)}dx.
\end{align*}
If $\alpha_2\neq0$, \eqref{trilinear} and Sobolev inequality yields
\begin{align*}
| J^{21}|&\lesssim
\|\nabla_x\cA(g)\|_{H^{N-1}(\RR^3)}
\|g_2\|_{H^N(\RR^3_x; L^2(\RR^3_v))}
\|h_2\|_{\cX^{N}(\RR^6)}\\
&\lesssim \cE_N(g)\cC_N(g)\cD_N(h).
\end{align*}
If $\alpha_2=0$, \eqref{equivalent} and \eqref{trilinear-b} yields
\begin{align*}
\big|(\Gamma(\p_x^{\alpha}g_2, \varphi_k), \p_x^{\alpha} h_2)_{L^2(\RR^3_v)}\big|
&\lesssim \|\p_x^{\alpha}g_2\|_{L^2_{s+\gamma/2}(\RR^3_v)}\,\,
|\!|\!|\p_x^{\alpha}h_2|\!|\!|\\
&
\lesssim |\!|\!|\p_x^{\alpha}g_2|\!|\!|\,\,
|\!|\!|\p_x^{\alpha}h_2|\!|\!|,
\end{align*}
thus
$$
| J^{21}|\lesssim
\|\cA(g)\|_{H^2(\RR^3)}
\|g_2\|_{\cX^{N}(\RR^6)}\|h_2\|_{\cX^{N}(\RR^6)}\lesssim \cE_N(g)\cD_N(g)\cD_N(h).
$$

\noindent
{\bf Estimation of $J^{22}$:}
\begin{align*}
 J^{22}&\sim \int_{\RR^3_x}
(\Gamma(\p_x^{\alpha_1}g_2, \p_x^{\alpha_1}g_2), \p_x^{\alpha} h_2)_{L^2(\RR^3_v)}dx,
\end{align*}
\eqref{trilinear-b} yields
\begin{align*}
| J^{22}|&\lesssim
\|g_2\|_{H^N(\RR^3_x; L^2(\RR^3_v))}
\|g_2\|_{\cX^{N}(\RR^6)}\|h_2\|_{\cX^{N}(\RR^6)}\lesssim \cE_N(g)\cD_N(g)\cD_N(h).
\end{align*}
Now, combining the above estimates yields the estimate \eqref{part2-nonlinear5} and this completes the proof of the Lemma \ref{triple-estimate}.
\end{proof}

\subsection{Linear Problem}
We consider the following linear Cauchy problem
\begin{equation}\label{linear-a}
\begin{cases}
\partial_tg+\frac 1 \varepsilon v\!\cdot\!\nabla_{x}g+\frac 1{\varepsilon^2}\mathcal{L}g=\frac{1}{\varepsilon}
\Gamma(f, f),\\
g|_{t=0}=g_0,
\end{cases}
\end{equation}
where $f$ is a given function. We study the existence of solution in the function space $H^N(\RR^3_x; L^2(\RR^3_v))$.

\begin{proposition}\label{existence-linear}
Under the assumption of Theorem \ref{theorem1},
let~$g_0\in H^N(\RR^3_x; L^2(\RR^3_v))$ with $N\ge 2$ and for some $T>0$, $f$ satisfies
$$
\sup_{0\le t\le T}\cE^2_N(f(t))+\int^T_0\cD^2_N(f(t))\,\mathrm{d}t<+\infty.
$$
Then the Cauchy problem \eqref{linear-a} admits an unique solution
$$
g\in L^\infty ([0, T]; H^N(\RR^3_x; L^2(\RR^3_v))).
$$
\end{proposition}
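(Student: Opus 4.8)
The plan is to treat \eqref{linear-a} as a linear evolution equation driven by the fixed forcing $\tfrac1\varepsilon\Gamma(f,f)$, to establish an $\varepsilon$-uniform a priori bound first, and then to produce an actual solution by an approximation scheme whose uniform estimates are exactly that bound. First I would derive the energy estimate. For each multi-index $|\alpha|\le N$, apply $\p_x^\alpha$ to \eqref{linear-a}, pair in $L^2(\RR^6_{x,v})$ with $\p_x^\alpha g$, and sum over $\alpha$. The transport term drops out, since
\[
\big(v\!\cdot\!\nabla_x\p_x^\alpha g,\ \p_x^\alpha g\big)_{L^2(\RR^6)}=\tfrac12\int_{\RR^6} v\!\cdot\!\nabla_x|\p_x^\alpha g|^2\,\mathrm{d}x\,\mathrm{d}v=0
\]
by integration by parts in $x$ ($v$ being constant in $x$); the linearized collision term is handled by the coercivity \eqref{coercive}, giving $\tfrac1{\varepsilon^2}(\mathcal{L}\p_x^\alpha g,\p_x^\alpha g)_{L^2}\ge \tfrac{C_1}{\varepsilon^2}|\!|\!|(\iI-\pP)\p_x^\alpha g|\!|\!|^2$, so that the sum produces the full dissipation $\tfrac{C_1}{\varepsilon^2}\cD_N^2(g)$.

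The source term is the one place where the factor $\tfrac1\varepsilon$ must be absorbed. Applying Lemma \ref{triple-estimate} with the roles $(g,g,h)\mapsto(f,f,g)$ gives $\big(\Gamma(f,f),g\big)_{H^N(\RR^3_x;L^2(\RR^3_v))}\lesssim \cE_N(f)\{\cC_N(f)+\cD_N(f)\}\,\cD_N(g)$, and since the right-hand factor is precisely $\cD_N(g)$, the splitting $\tfrac1\varepsilon\,\cD_N(g)\cdot X\le \tfrac{C_1}{2\varepsilon^2}\cD_N^2(g)+C\,X^2$ lets me hide half of the dissipation and leave the $g$-independent remainder $X^2=\cE_N^2(f)\{\cC_N(f)+\cD_N(f)\}^2$; the bookkeeping is exactly such that the single power of $\tfrac1\varepsilon$ on the source meets one power of $\cD_N(g)$ and reproduces the $\tfrac1{\varepsilon^2}$ weight carried by the dissipation. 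Integrating in time yields
\[
\sup_{[0,T]}\cE_N^2(g)+\tfrac{C_1}{\varepsilon^2}\!\int_0^T\!\cD_N^2(g)\,\mathrm{d}t\ \le\ \cE_N^2(g_0)+C\!\int_0^T\!\cE_N^2(f)\{\cC_N^2(f)+\cD_N^2(f)\}\,\mathrm{d}t,
\]
and the last integral is finite because $\cC_N\le\cE_N$ by \eqref{C-D}, $\sup_{[0,T]}\cE_N(f)<\infty$, and $\int_0^T\cD_N^2(f)\,\mathrm{d}t<\infty$ by hypothesis. Note that no dissipation on the macroscopic part is needed: the full $\cE_N^2(g)$ is controlled by the time derivative alone, and since the source is independent of $g$, no Gronwall loop is required.

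To upgrade this a priori bound to existence I would run an approximation scheme whose uniform bounds reproduce the displayed inequality: for instance, regularize \eqref{linear-a} by an artificial viscosity (in $x$ and/or $v$) together with a spectral/Galerkin truncation, so that the approximate problem falls under standard linear existence theory while the added term contributes only a favorable sign to the energy identity; then weak-$\star$ compactness in $L^\infty([0,T];H^N(\RR^3_x;L^2(\RR^3_v)))$ and weak convergence of the microscopic part in $L^2([0,T];\cX^N)$ allow passage to the limit and identification of a solution. (A semigroup construction is also available: $-\tfrac1\varepsilon v\!\cdot\!\nabla_x-\tfrac1{\varepsilon^2}\mathcal{L}$ is dissipative in $H^N_xL^2_v$ by the two facts above, and one could invoke Lumer--Phillips plus Duhamel, but the approximation route avoids verifying the range condition for an unbounded operator.) Uniqueness is then immediate from linearity: the difference of two solutions solves the homogeneous problem with zero data and zero source, so the same energy identity forces $\cE_N\equiv0$.

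I expect the construction step, not the estimate, to be the main obstacle. The velocity-weighted transport $\tfrac1\varepsilon v\!\cdot\!\nabla_x$ is unbounded in $v$ and couples the macroscopic and microscopic components, while in the non-cutoff regime $\mathcal{L}$ is genuinely nonlocal with only the fractional-type coercivity \eqref{coercive} at hand. The delicate point is to choose a regularization that is simultaneously compatible with the skew-symmetry of transport and the coercivity of $\mathcal{L}$, so that classical existence applies at the approximate level while every constant remains independent of both the regularization parameter and $\varepsilon$; once that is arranged, all the analytic content is already contained in the energy estimate established above.
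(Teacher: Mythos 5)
Your a priori estimate is correct and is essentially the bound \eqref{bbb} that the paper only records at the very end of its proof: the transport term is skew-adjoint in $x$, the coercivity \eqref{coercive} produces the dissipation $\tfrac{C_1}{\eps^2}\cD_N^2(g)$, and Lemma \ref{triple-estimate} applied to $(\Gamma(f,f),g)_{H^N(\RR^3_x;L^2(\RR^3_v))}$ together with Young's inequality absorbs the single factor $\tfrac1\eps$ on the source into half of that dissipation, leaving a $g$-independent remainder that is integrable on $[0,T]$ by \eqref{C-D} and the hypothesis on $f$. Where you genuinely diverge from the paper is the existence mechanism. The paper does not regularize or truncate at all: it rewrites the problem as ${\mathcal T}g=\tfrac1\eps\Gamma(f,f)$, proves the \emph{same} energy estimate but backward in time for the adjoint ${\mathcal T}^*_N h$ (yielding $\|h\|_{L^\infty([0,T];H^N)}+\tfrac1\eps(\int_0^T\cD_N^2(h)\,\mathrm{d}t)^{1/2}\lesssim\|{\mathcal T}^*_Nh\|_{L^2([0,T];H^N)}$), shows that the linear functional ${\mathcal G}(w)=\tfrac1\eps(\Gamma(f,f),h)+(g_0,h(0))$ is therefore continuous on the dense subspace ${\mathbb W}=\{{\mathcal T}^*_Nh\}$ of $L^2([0,T];H^N(\RR^3_x;L^2(\RR^3_v)))$, and then invokes Hahn--Banach and Riesz representation to produce a weak solution directly. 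What each route buys: yours delivers the energy inequality immediately for the solution you construct, but it requires actually building approximate solutions compatible simultaneously with the unbounded transport $\tfrac1\eps v\!\cdot\!\nabla_x$ and the nonlocal non-cutoff $\mathcal{L}$ --- exactly the step you flag as the main obstacle and leave at the level of a plan; the duality route obtains existence essentially for free from the adjoint estimate and sidesteps any regularization, at the price of first landing only in $L^2([0,T];H^N(\RR^3_x;L^2(\RR^3_v)))$ and having to recover the $L^\infty$-in-time bound \eqref{bbb} a posteriori from the weak formulation \eqref{weak-S}. Your closing uniqueness argument (the difference of two solutions satisfies the homogeneous problem, so the energy identity forces it to vanish) is sound and in fact supplies a detail the paper's proof leaves implicit.
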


\begin{proof} We prove the existence of solution to the Cauchy problem \eqref{linear-a} by the Hahn-Banach theorem. We rewrite \eqref{linear-a} into the following form
\begin{equation}\label{4.1.1}
{\mathcal T} g \equiv \partial_t g + \frac{1}{\varepsilon}v\!\cdot\!\nabla_x g + \frac{1}{\varepsilon^2}\cL g= \frac 1 \varepsilon\Gamma(f, f) , \ g(0) =g_0.
\end{equation}
For $h\in C^\infty ([0,T];\, {\mathcal S}(\RR^6_{x,v}) )$ with $h(T)
=0$, we define ${\mathcal T}^*_{N}$ through
$$
\Big(g,\,\, {\mathcal T}^*_{N} \, h\Big)_{L^2([0,\,T];
H^N(\RR^3_x; L^2(\RR^3_v)))} = \Big({\mathcal T}\,g,\, h\Big)_{L^2([0,\,T];
H^N(\RR^3_x; L^2(\RR^3_v)))}\,\, ,
$$
so that ${\mathcal T}^*_{N} $ is the adjoint of the operator
${\mathcal T}$ in the Hilbert space $L^2 ([0,\,T]; H^N(\RR^3_x; L^2(\RR^3_v)))$.

Set
$$
{\mathbb W} = \left\{ w = {\mathcal T}^*_{N}\, h ;\,\,h\in
C^\infty ([0,T];\, {\mathcal S}(\RR^6_{x,v}) )\,\,\mbox{with}\,\,
h(T) =0 \right\},
$$
which  is a dense subspace of $L^2 ([0,\,T];
\,H^N(\RR^3_x; L^2(\RR^3_v)))$. And we also have
$$
{\mathcal T}^\ast_{N} (h) = -\partial_t  h -
\frac{1}{\varepsilon}(v\!\cdot\!\nabla_x ) h + \frac{1}{\varepsilon^2}\cL\, h.
$$
Then
\begin{align*}
\Big(h,\,\, {\mathcal T}^*_{N} \, h\Big)_{H^N(\RR^3_x; L^2(\RR^3_v))}
=&\frac 12 \frac{\mathrm{d}}{\mathrm{d}t}|| h (t) ||^2_{H^N(\RR^3_x; L^2(\RR^3_v))} +\frac{1}{\varepsilon}
\Big(v\!\cdot\!\nabla_x\, h ,\, h\Big)_{
H^N(\RR^3_x; L^2(\RR^3_v))}\\
& +\frac{1}{\varepsilon^2} \Big(\cL (h) , h\Big)_{H^N(\RR^3_x; L^2(\RR^3_v)))}.
\end{align*}
Note that the second term above vanishes and the estimate \eqref{coercive}, we have
\begin{align*}
\int^T_t \Big|\Big(h,\,\, {\mathcal T}^*_{N} \,
h\Big)_{H^N(\RR^3_x; L^2(\RR^3_v))}\Big|\,\mathrm{d}t &\geq \frac12|| h(t)||^2_{H^N(\RR^3_x; L^2(\RR^3_v))} + \frac{c}{\varepsilon^2}\int^T_t \cD^2_N(h(s))\,\mathrm{d}s\, .
\end{align*}
Thus, for all $0<t<T$,
\begin{align*}
& || h(t)||^2_{H^N(\RR^3_x; L^2(\RR^3_v))}+ \frac{c}{\varepsilon^2}\int^T_t\cD^2_N(h(s))\,\mathrm{d}s\\
&\leq  || {\mathcal T}^*_{N} (h) ||_{L^2([t,\,T];
H^N(\RR^3_x; L^2(\RR^3_v)))} || h||_{L^2([t,\,T]; H^N(\RR^3_x; L^2(\RR^3_v)))}\,.
\end{align*}
Hence, we get
\begin{equation}\label{4.1.2+0}
\| h\|_{L^\infty([0,\,T]; H^N(\RR^3_x; L^2(\RR^3_v)))}  \leq C \sqrt{T} ||  {\mathcal T}^*_{N}
(h) ||_{L^2([0,\,T]; H^N(\RR^3_x; L^2(\RR^3_v)))}\,,
\end{equation}
and
\begin{equation}\label{4.1.2}
\frac{1}{\varepsilon}\big(\int^T_0\cD^2_N(h(s))\,\mathrm{d}s\big)^{1/2}\leq C ||  {\mathcal T}^*_{N}
(h) ||_{L^2([0,\,T]; H^N(\RR^3_x; L^2(\RR^3_v)))}\,.
\end{equation}

Next, we define a functional ${\mathcal G}$ on ${\mathbb W}$ as follows
$$
{\mathcal G}(w) = \frac1 \varepsilon(\Gamma(f, f),\,h)_{L^2([0,\,T]; H^N(\RR^3_x; L^2(\RR^3_v)))} + ( g_0
, h(0))_{H^N(\RR^3_x; L^2(\RR^3_v))}.
$$

Then, using \eqref{C-D} and \eqref{upper-B}
\begin{align*}
 |{\mathcal G}(w)| &\leq \frac1\varepsilon \int^T_0\{\cE^2_N(f)+\cE_N(f)\cD_N(f)\}\cD_N(h)\,\mathrm{d}t\\
&\qquad\qquad+\|g_0\|_{H^N(\RR^3_x; L^2(\RR^3_v))}\|h(0)\|_{H^N(\RR^3_x; L^2(\RR^3_v))}\\
&\leq \frac1\varepsilon\sup_{0<t<T}\cE_N(f)\{ \sup_{0<t<T}\cE_N(f)+\big(\int^T_0\cD^2_N(f)\,\mathrm{d}t\big)^{1/2}\}\big(\int^T_0
\cD^2_N(h)\,\mathrm{d}t\big)^{1/2}\\
&\qquad\qquad+\|g_0\|_{H^N(\RR^3_x; L^2(\RR^3_v))}\|h\|_{L^\infty([0, T]; H^N(\RR^3_x; L^2(\RR^3_v)))},
\end{align*}
finally, \eqref{4.1.2+0} and \eqref{4.1.2} imply
\begin{align*}
 |{\mathcal G}(w)| \leq C(f, g_0) || {\mathcal T}^*_{N} (h) ||_{L^2 ([0,\,T];\,
H^N(\RR^3_x; L^2(\RR^3_v)))}\leq C|| w||_{L^2 ([0,\,T];\, H^N(\RR^3_x; L^2(\RR^3_v)))}\, ,
\end{align*}
where
$$
C(f, g_0)=\sup_{0<t<T}\cE_N(f)\{ \sup_{0<t<T}\cE_N(f)+\big(\int^T_0\cD^2_N(f)\,\mathrm{d}t\big)^{1/2}\}
+ \sqrt{T}\|g_0\|_{H^N(\RR^3_x; L^2(\RR^3_v))}.
$$

Thus,
${\mathcal G}$ is a continuous linear functional on $\Big({\mathbb
W};\,\|\,\cdot\,\|_{L^2 ([0,\,T];\, H^N(\RR^3_x; L^2(\RR^3_v)))}\Big)$. So by the Hahn-Banach Theorem, $\mathcal{G}$ can be extended from $\mathbb{W}$ to $L^2 ([0,\,T];\, H^N(\RR^3_x; L^2(\RR^3_v)))$. From the Riesz representation theorem, there exists $g\in L^2 ([0,\,T];\, H^N(\RR^3_x; L^2(\RR^3_v)))$ such that for any
$w\in{\mathbb W}$,
$$
{\mathcal G}(w)= \big(g,\,w\big)_{L^2 ([0,\,T];\, H^N(\RR^3_x; L^2(\RR^3_v)))}\,.
$$
 For any $h\in C^\infty ([0,T]; {\mathcal S}
(\RR^6_{x,v}) )$ with $h(T) =0$, we have
\begin{align*}
\Big(g,\,\, {\mathcal T}^*_{N} \, h\Big)_{L^2([0,\,T];
H^N(\RR^3_x; L^2(\RR^3_v)))} &= \frac1\varepsilon\big(\Gamma(f, f),\, h\big)_{L^2([0,\,T]; H^N(\RR^3_x; L^2(\RR^3_v)))}\\
&\qquad\qquad
+ \big(g_0 ,\, h(0)\big)_{H^N(\RR^3_x; L^2(\RR^3_v))},
\end{align*}
and by the definition of the operator ${\mathcal T}^*_{N}$, we have also
\begin{equation}\label{weak-S}
\Big({\mathcal T}\,g,\,\, \tilde h\Big)_{L^2([0,\,T]; L^2(\RR^6_{x,v}))} =
\frac1\varepsilon\big(\Gamma(f, f),\, \tilde h\big)_{L^2([0,\,T]; L^2(\RR^6_{x,v}))} + \big(g_0 ,\,
\tilde h(0)\big)_{L^2(\RR^6_{x,v})},
\end{equation}
where
$$
\tilde h  = \Lambda^{2N}_x h \in C^\infty ([0, T];\,
{\mathcal S}(\RR^6_{x,v}) )\,\,\, \mbox{with}\,\,\, \tilde h (T )=0,
$$
where $\Lambda=(1-\Delta_{x})^{\frac 12}$.
Since $\Lambda^{2N} $ is an isomorphism on
$\big\{h:h\in C^\infty ([0, T];\, {\mathcal S}(\RR^6_{x,v}) )$
with $ h (T )=0\big\}$, then $g\in L^2 ([0,\,T];\,\,
H^N(\RR^3_x; L^2(\RR^3_v)))$ is a solution of the Cauchy problem (\ref{4.1.1}). Using \eqref{weak-S}, we can also prove
\begin{equation}\label{bbb}
\sup_{0<t<T}\cE_N^2(g(t))+\frac 1{\varepsilon^2}\int^T_0 \cD^2_N(g(t))\,\mathrm{d}t\lesssim \tilde{C}(f, g_0),
\end{equation}
where $\tilde{C}(f, g_0)$ is similar to $C(f, g_0)$ and independents on $0<\varepsilon\le 1$. Indeed,
$$
\tilde{C}(f, g_0) = \cE_N^2(g_0) + \{ \sup_{0<t<T}\cE_N^2(f(t))+ \int^T_0 \cD^2_N(f(t))\,\mathrm{d}t\}^2\,.
$$
\end{proof}

\subsection{Local existence for nonlinear problem}
We consider now the following iteration
\begin{equation}\label{linear}
\begin{cases}
\partial_tg^{n+1}+\frac 1 \varepsilon v\cdot\nabla_{x}g^{n+1}+\frac 1{\varepsilon^2}\mathcal{L}g^{n+1}=\frac{1}{\varepsilon}
\Gamma(g^{n}, g^{n}),\\
g^{n+1}|_{t=0}=g_0,
\end{cases}
\end{equation}
with $g^0\equiv0$.

\begin{proposition}\label{iteration-solution}
There exists $0<\delta_0\le 1, 0<T\le 1$, such that for any $0<\varepsilon\le1, g_0\in H^N(\RR^3_x; L^2(\RR^3_v)), N\ge 2$ with
\begin{equation}\label{initial-d}
\|g_0\|_{H^N(\RR^3_x; L^2(\RR^3_v))}\le \delta_0,
\end{equation}
then the iteration problem \eqref{linear} admits a sequence of solution $\{g^n\}_{n\ge 1}$ satisfy
\begin{equation}\label{iteration-n}
\sup_{t\in [0, T]}\cE^2_N(g^n)
+\frac{1}{\varepsilon^2}\int^T_0\cD^2_N(g^n)\,\mathrm{d}t\le 4 \delta^2_0.
\end{equation}
\end{proposition}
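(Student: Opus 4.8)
The plan is to argue by induction on $n$, using Proposition \ref{existence-linear} to produce each iterate and an $\eps$-uniform a priori energy estimate to propagate the bound \eqref{iteration-n}. For $n=0$ the claim is vacuous since $g^0\equiv0$; moreover $g^1$ solves \eqref{linear} with vanishing source $\Gamma(g^0,g^0)=0$, so Proposition \ref{existence-linear} (whose hypothesis on $f=g^0$ is trivially met) yields $g^1$ together with a bound well below $4\delta_0^2$. Assuming now that $g^n$ exists on $[0,T]$ and satisfies \eqref{iteration-n}, the induction hypothesis gives $\sup_{[0,T]}\cE_N^2(g^n)\le 4\delta_0^2$ and $\int_0^T\cD_N^2(g^n)\,\mathrm{d}t\le 4\delta_0^2\eps^2<+\infty$, so $f=g^n$ meets the assumption of Proposition \ref{existence-linear} and the next iterate $g^{n+1}\in L^\infty([0,T];H^N(\RR^3_x;L^2(\RR^3_v)))$ exists. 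It then remains to show that $g^{n+1}$ again obeys \eqref{iteration-n}.

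For the energy estimate I would take the inner product of \eqref{linear} with $g^{n+1}$ in $H^N(\RR^3_x;L^2(\RR^3_v))$ (justified for the merely $L^\infty H^N$ solution through the weak formulation used in the proof of Proposition \ref{existence-linear}, or by mollification). The transport term $\frac1\eps(v\!\cdot\!\nabla_x g^{n+1},g^{n+1})$ is antisymmetric and vanishes after integration by parts in $x$, while the coercivity estimate \eqref{coercive}, summed over $|\alpha|\le N$, gives $\frac1{\eps^2}(\cL g^{n+1},g^{n+1})_{H^N(\RR^3_x;L^2(\RR^3_v))}\ge \frac{C_1}{\eps^2}\cD_N^2(g^{n+1})$. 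Hence
\[
\tfrac12\tfrac{\mathrm{d}}{\mathrm{d}t}\cE_N^2(g^{n+1})+\tfrac{C_1}{\eps^2}\cD_N^2(g^{n+1})\le \tfrac1\eps\big|(\Gamma(g^n,g^n),g^{n+1})_{H^N(\RR^3_x;L^2(\RR^3_v))}\big|.
\]
The decisive step is to bound the right-hand side by Lemma \ref{triple-estimate}, i.e. by $\frac{C}{\eps}\cE_N(g^n)\{\cC_N(g^n)+\cD_N(g^n)\}\cD_N(g^{n+1})$, and then to apply Young's inequality so as to absorb the factor $\frac1\eps\cD_N(g^{n+1})$ into the dissipation $\frac{C_1}{\eps^2}\cD_N^2(g^{n+1})$ coming from coercivity. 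This pairing is exactly what defeats the singular prefactor: it leaves on the right-hand side the fully $\eps$-independent quantity $\frac{C'}{C_1}\cE_N^2(g^n)\{\cC_N^2(g^n)+\cD_N^2(g^n)\}$.

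Integrating in time and inserting the induction hypothesis then closes the bound. Since $\cC_N\le\cE_N$ by \eqref{C-D}, the macroscopic contribution is controlled by short time, $\int_0^t\cE_N^2(g^n)\cC_N^2(g^n)\,\mathrm{d}s\le T\big(\sup_{[0,T]}\cE_N^2(g^n)\big)^2\le 16\delta_0^4 T$, whereas the kinetic contribution uses the dissipation integral together with $\eps\le1$, $\int_0^t\cE_N^2(g^n)\cD_N^2(g^n)\,\mathrm{d}s\le \big(\sup_{[0,T]}\cE_N^2(g^n)\big)\int_0^T\cD_N^2(g^n)\,\mathrm{d}s\le 16\delta_0^4$. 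Combined with $\cE_N^2(g_0)\le\delta_0^2$ from \eqref{initial-d}, this gives, for every $t\in[0,T]$, that $\cE_N^2(g^{n+1}(t))+\frac{1}{\eps^2}\int_0^t\cD_N^2(g^{n+1})\,\mathrm{d}s$ is bounded by $\delta_0^2$ plus a remainder of order $\delta_0^4(1+T)$ times the structural constants; choosing $\delta_0$ and $T$ small enough (in terms of $C_1,C_2$ and the constant of Lemma \ref{triple-estimate}) makes this at most $4\delta_0^2$, which is precisely \eqref{iteration-n} for $g^{n+1}$ and completes the induction. The one point requiring real care — and the crux of the whole uniformity — is the $\eps$-uniform absorption in the preceding paragraph: the $\frac1\eps$ in front of $\Gamma(g^n,g^n)$ must be defeated by the $\frac1{\eps^2}$ dissipation rather than by any hidden power of $\eps$, and it is the dissipative structure of $\cD_N$ together with the trilinear bound \eqref{upper-B} that makes this possible. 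The generous factor $4$ in \eqref{iteration-n} simply provides the margin needed to absorb the fixed constants $C_1$ and $C$.
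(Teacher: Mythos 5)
Your proposal is correct and follows essentially the same route as the paper: induction on $n$, existence of each iterate from Proposition \ref{existence-linear}, the $H^N_xL^2_v$ energy identity with the transport term vanishing, coercivity \eqref{coercive} for the dissipation, the trilinear bound \eqref{upper-B} combined with \eqref{C-D}, and Young's inequality to absorb the $\frac1\eps\,\cD_N(g^{n+1})$ factor into the $\frac1{\eps^2}\cD_N^2(g^{n+1})$ dissipation, closing with $T\le 1$ and $\delta_0$ small. The only cosmetic difference is that the paper fixes $T\le 1$ once and tunes only $\delta_0$ (via $1+64C^2\delta_0^2\le 4$), whereas you allow tuning both; the substance is identical.
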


\begin{proof}Notice that for the linear Cauchy problem
\eqref{linear}, for given $g^n$ satisfy \eqref{iteration-n},
the existence of $g^{n+1}$ is assured by the Proposition \ref{existence-linear}. So that it is enough to prove \eqref{iteration-n} by induction, using \eqref{coercive} and \eqref{upper-B}, there exists $C>0$ such that
\begin{align*}
\frac{\mathrm{d}}{\mathrm{d}t}\cE^2_N(g^{n+1})+\frac{1}{\varepsilon^2}
\cD^2_N(g^{n+1})
&\le\frac{C}{\varepsilon}\big\{\cE^2_N(g^n) +\cE_N(g^n)\cD_N(g^{n})\big\}\cD_N(g^{n+1})\\
&\le \frac{1}{2\varepsilon^2}\cD^2_N(g^{n+1})+2C^2
\cE^2_N(g^n)\big\{\cE^2_N(g^n) +\cD^2_N(g^{n})\big\}.
\end{align*}
Thus, we get
\begin{align*}
&\frac{\mathrm{d}}{\mathrm{d}t}\cE^2_N(g^{n+1})
+\frac{1}{\varepsilon^2}\cD^2_N(g^{n+1})\\
&\qquad\le 4C^2\cE^2_N(g^n)\big\{\cE^2_N(g^n)+
\cD^2_N(g^n)\big\}.
\end{align*}
Integration on $[0, T]$ with $T\le 1$,
\begin{align*}
&\sup_{t\in [0, T]}\cE^2_N(g^{n+1})
+\frac{1}{\varepsilon^2}\int^T_0\cD^2_N(g^{n+1})\,\mathrm{d}t\le \cE^2_N(g_0)\\
&\qquad+4C^2\sup_{t\in [0, T]}\cE^2_N(g^{n})\big\{\sup_{t\in [0, T]}\cE^2_N(g^{n})+
\int^T_0\cD^2_N(g^n)\big\},
\end{align*}
we complete the proof of the Proposition if we chose $\delta_0$ such that
$$
1+64 C^2\delta^2_0\le 4.
$$
\end{proof}

Finally, from the uniform estimate \eqref{iteration-n}, we can prove the convergence of $\{g^n\}$, thus the following local existence results through a standard argument as in \cite{amuxy3}.

\begin{theorem}\label{local-solution}
There exists $\delta_0>0, T>0$, such that for any $0<\varepsilon<1, g_{\varepsilon,0}\in H^N(\RR^3_x; L^2(\RR^3_v))$, $N\ge 2$  with
\begin{equation}\label{initial-d}
\|g_{\varepsilon, 0}\|_{H^N(\RR^3_x; L^2(\RR^3_v))}\le \delta_0,
\end{equation}
then the Cauchy problem \eqref{e2} admits an unique solution $g_\varepsilon \in L^\infty ([0, T]; H^N(\RR^3_x; L^2(\RR^3_v)))$ satisfy
\begin{equation}\label{local-est}
\sup_{t\in [0, T]}\cE^2_N(g_\varepsilon)
+\frac{1}{\varepsilon^2}\int^T_0\cD^2_N(g_\varepsilon)\,\mathrm{d}t\le 4 \delta^2_0.
\end{equation}
\end{theorem}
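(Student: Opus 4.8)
The plan is to upgrade the uniform bound \eqref{iteration-n} on the iterates $\{g^n\}$ into strong convergence by showing the sequence is Cauchy in $L^\infty([0,T];H^N(\RR^3_x;L^2(\RR^3_v)))$, and then to pass to the limit in the weak formulation of \eqref{linear}. First I would set $w^{n+1}=g^{n+1}-g^n$ and subtract consecutive copies of \eqref{linear}. Since $g^{n+1}$ and $g^n$ carry the same initial datum $g_0$, the difference solves the linear equation
\begin{equation}\nonumber
\partial_t w^{n+1}+\tfrac1\eps v\!\cdot\!\nabla_x w^{n+1}+\tfrac1{\eps^2}\cL w^{n+1}=\tfrac1\eps\big\{\Gamma(w^n,g^n)+\Gamma(g^{n-1},w^n)\big\},\qquad w^{n+1}|_{t=0}=0,
\end{equation}
where I used the bilinearity identity $\Gamma(g^n,g^n)-\Gamma(g^{n-1},g^{n-1})=\Gamma(w^n,g^n)+\Gamma(g^{n-1},w^n)$.

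Next I would run the basic energy estimate in $H^N(\RR^3_x;L^2(\RR^3_v))$ exactly as in Proposition \ref{iteration-solution}: pairing the equation with $w^{n+1}$, the transport term drops out and the coercivity \eqref{coercive} bounds the linearized term from below by $\tfrac{c}{\eps^2}\cD_N^2(w^{n+1})$. For the source I would invoke the bilinear analogue of Lemma \ref{triple-estimate}---its proof uses only the macro--micro splitting of each argument separately, so it applies verbatim to $\Gamma(f,g)$---to control the right-hand side against $\cD_N(w^{n+1})$. A single Young inequality then absorbs the factor $\cD_N(w^{n+1})$ into the dissipation $\tfrac1{\eps^2}\cD_N^2(w^{n+1})$, at the cost of terms of schematic form $\cE_N^2(w^n)(\cC_N+\cD_N)^2(g^n)$ and $(\cC_N+\cD_N)^2(w^n)\,\cE_N^2(g^n)$, carrying no residual power of $\eps$ because the singular $\tfrac1\eps$ is exactly matched against the $\tfrac1{\eps^2}$ dissipation.

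I would then integrate on $[0,T]$ and introduce the difference functional
\begin{equation}\nonumber
\cF_N(w):=\sup_{[0,T]}\cE_N^2(w)+\tfrac1{\eps^2}\int_0^T\cD_N^2(w)\,\mathrm{d}t.
\end{equation}
The heart of the bookkeeping is that every coefficient integral is small: by \eqref{C-D} and \eqref{iteration-n} one has $\int_0^T\cC_N^2(g^n)\le T\sup\cE_N^2(g^n)\lesssim T\delta_0^2$ and $\int_0^T\cD_N^2(g^n)\le 4\eps^2\delta_0^2$, while $\int_0^T\cC_N^2(w^n)\le T\sup\cE_N^2(w^n)$ and $\int_0^T\cD_N^2(w^n)\le\eps^2\cF_N(w^n)$. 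Pulling the suprema of $\cE_N$ out of the time integrals and using $\eps\le1$, each contribution is bounded by a constant multiple of $\delta_0^2(T+\eps^2)\,\cF_N(w^n)\lesssim\delta_0^2\cF_N(w^n)$. Choosing $\delta_0$ (and, if necessary, $T$) small enough that this constant is at most $\tfrac12$ gives the contraction $\cF_N(w^{n+1})\le\tfrac12\cF_N(w^n)$, so the telescoping sum converges geometrically and $\{g^n\}$ is Cauchy in $L^\infty([0,T];H^N(\RR^3_x;L^2(\RR^3_v)))$, with a strong limit $g_\eps$.

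Finally I would verify that $g_\eps$ solves \eqref{e2}: the strong convergence of $g^n$ together with the continuity furnished by Lemma \ref{triple-estimate} lets me pass to the limit in $\tfrac1\eps\Gamma(g^n,g^n)$ in the weak formulation, while the uniform bound \eqref{iteration-n} passes to the limit by weak lower semicontinuity of the norms and yields precisely \eqref{local-est} with the same constant $4\delta_0^2$; uniqueness follows from the same difference estimate applied to two putative solutions. I expect the main obstacle to be this $\eps$-uniform bookkeeping in the contraction step: since $\cD_N$ controls only the microscopic component, the macroscopic part of $w^n$ enters solely through $\cC_N\le\cE_N$, whose time integral supplies the crucial factor $T$, and one must check that the singular prefactor $\tfrac1\eps$ is entirely compensated by the $\tfrac1{\eps^2}$ dissipation so that the contraction constant stays bounded uniformly as $\eps\to0$.
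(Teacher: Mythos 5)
Your proposal is correct in outline but takes a genuinely different route from the paper. You run the contraction for $w^{n}=g^{n+1}-g^{n}$ directly in the top norm, i.e.\ for the functional $\sup_{[0,T]}\cE_N^2(w)+\eps^{-2}\int_0^T\cD_N^2(w)\,\mathrm{d}t$, which requires a bilinear $H^N$-level analogue of Lemma \ref{triple-estimate} for $\Gamma(f,g)$ with $f\neq g$. The paper instead proves the contraction only at the $L^2(\RR^6_{x,v})$ level (the functional $\sup\cE_0^2(w)+\eps^{-2}\int\cD_0^2(w)$), using just the pointwise-in-$x$ trilinear bound \eqref{trilinear} together with the embedding $H^N(\RR^3_x)\hookrightarrow L^\infty(\RR^3_x)$ to place the high-norm iterate in $L^\infty_x$; it then upgrades to convergence in $H^{N-\eta}$ by interpolating against the uniform bound \eqref{iteration-n}, and recovers \eqref{local-est} and membership in $H^N$ by weak lower semicontinuity. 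Your route buys strong convergence in $H^N$ itself and avoids the interpolation step, while the paper's route is more economical: it never needs the Leibniz/commutator bookkeeping for $\p_x^\alpha\Gamma(f,g)$ with two distinct arguments. One point in your write-up deserves care: the claim that Lemma \ref{triple-estimate} ``applies verbatim'' to $\Gamma(f,g)$ is slightly glib, because the $J^{11}$ estimate in its proof rests on the quadratic Gagliardo--Nirenberg inequality \eqref{2.1112} for $\cA^2$ of a single function; for two distinct arguments you must either polarize that inequality or fall back on the cruder product bound $\|\p^\alpha(\cA(f)\cA(g))\|_{L^2}\lesssim\cE_N(f)\cE_N(g)$. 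The cruder bound is in fact enough for your purposes, since after Young's inequality the resulting term $\cE_N^2(f)\cE_N^2(g)$ is integrated in time and picks up the factor $T$ you already exploit, so the contraction constant $C\delta_0^2(1+T)$ is still made smaller than $\tfrac12$ by shrinking $\delta_0$ and $T$; but this substitution should be stated explicitly rather than asserted as verbatim reuse.
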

\begin{proof}

It is enough to prove that $\{g^n\}$ is a Cauchy sequence in $L^\infty ([0, T]; L^2(\RR^6_{x,v}))$. Set $w^n=g^{n+1}-g^n$
and deduce from \eqref{linear},
$$
\begin{cases}
\partial_tw^{n}+\frac 1 \varepsilon v\cdot\nabla_{x}w^{n}+\frac 1{\varepsilon^2}\mathcal{L}w^{n}=\frac{1}{\varepsilon}\Big[
\Gamma(g^{n}, g^{n})-\Gamma(g^{n-1}, g^{n-1})\Big],\\
w^{n}|_{t=0}=0.
\end{cases}
$$
Since, for any $h\in L^2$
\begin{align*}
&\Big(\Gamma(g^{n}, g^{n})-\Gamma(g^{n-1}, g^{n-1}), \mathbf{P}h\Big)_{L^2(\RR^3_v)}=0\,, \\
&\Gamma(g^{n}, g^{n})-\Gamma(g^{n-1}, g^{n-1})=\Gamma(g^{n}, w^{n-1})+
\Gamma(w^{n-1}, g^{n-1}),
\end{align*}
then
\begin{align*}
&\Big(\Gamma(g^{n}, g^{n})-\Gamma(g^{n-1}, g^{n-1}), w^n\Big)_{L^2(\RR^6)}\\
&=\Big(\Gamma(g^{n}, w^{n-1})+
\Gamma(w^{n-1}, g^{n-1}), w^n_2\Big)_{L^2(\RR^6)},
\end{align*}
using \eqref{trilinear} and  $H^N(\mathbb{R}^3_x)\hookrightarrow L^\infty(\mathbb{R}^3_x)$ for $N \geq 2$,
\begin{align*}
&\Big|\Big(\frac{1}{\varepsilon}(\Gamma(g^{n}, w^{n-1})+
\Gamma(w^{n-1}, g^{n-1})), w^n_2\Big)_{L^2(\RR^6)}\Big|\\
&\le \frac{C}{\varepsilon}\mathcal{E}_N(g^{n})(\|w^{n-1}_1\|_{\mathcal{X}^0}+\mathcal{D}_0(w^{n-1})) \mathcal{D}_0(w^{n})\\
&+\frac{C}{\varepsilon}
\mathcal{E}_0(w^{n-1})(\|g^{n-1}_1\|_{\mathcal{X}^N}+\mathcal{D}_N(g^{n-1})) \mathcal{D}_0(w^{n})\\
&\le C_\delta\mathcal{E}^2_N(g^{n})(\|w^{n-1}_1\|^2_{\mathcal{X}^0}+\mathcal{D}^2_0(w^{n-1})) +\frac{\delta}{\varepsilon^2}\mathcal{D}^2_0(w^{n})\\
&+C_\delta
\mathcal{E}^2_0(w^{n-1})(\|g^{n-1}_1\|^2_{\mathcal{X}^N}+\mathcal{D}^2_N(g^{n-1})).
\end{align*}
Thus, fix a small $\delta>0$, we get
\begin{align*}
\frac{\mathrm{d}}{\mathrm{d}t}\|w^{n}\|^2_{L^2(\RR^6)}+\frac{1}{\varepsilon^2}
\cD^2_0(w^{n})
&\le C_\delta\mathcal{E}^2_N(g^{n})(\|w^{n-1}_1\|^2_{\mathcal{X}^0}+\mathcal{D}^2_0(w^{n-1})) \\
&+C_\delta
\mathcal{E}^2_0(w^{n-1})(\|g^{n-1}_1\|^2_{\mathcal{X}^N}+\mathcal{D}^2_N(g^{n-1})).
\end{align*}
Note that
$$
\|w^{n-1}_1\|^2_{\mathcal{X}^0}\le C \mathcal{E}^2_0(w^{n-1}),\quad
\|g^{n-1}_1\|^2_{\mathcal{X}^N}\le C \mathcal{E}^2_N(g^{n-1}),
$$
we have proved
\begin{align*}
&\|w^{n}\|^2_{L^\infty([0, T]; L^2(\RR^6))}+\frac{1}{\varepsilon^2}
\int^T_0\cD^2_0(w^{n})\,\mathrm{d}t\\
&\le C \sup_{t\in [0, T]}\mathcal{E}^2_N(g^{n})\Big(T\sup_{t\in [0, T]}\mathcal{E}^2_0(w^{n-1})+\int^T_0\mathcal{D}^2_0(w^{n-1})\,\mathrm{d}t\Big) \\
&+C \sup_{t\in [0, T]}
\mathcal{E}^2_0(w^{n-1})\Big(T\sup_{t\in [0, T]}\mathcal{E}^2_N(g^{n-1})+\int^T_0\mathcal{D}^2_N(g^{n-1})\,\mathrm{d}t\Big).
\end{align*}
Using now \eqref{iteration-n} with $\delta_0>0$ small enough, we get that for any $0<\varepsilon\le 1$
\begin{align*}
\sup_{t\in [0, T]}\mathcal{E}^2_0(w^{n})+\frac{1}{\varepsilon^2}
\int^T_0\cD^2_0(w^{n})\,\mathrm{d}t
&\le \frac 12 \Big(\sup_{t\in [0, T]}\mathcal{E}^2_0(w^{n-1})+\frac{1}{\varepsilon^2}\int^T_0
\mathcal{D}^2_0(w^{n-1})\,\mathrm{d}t\Big) .
\end{align*}
Thus we have proved that  $\{g^n\}$ is a Cauchy sequence in $L^\infty ([0, T]; L^2(\RR^6_{x,v}))$.

Combining with the estimate \eqref{iteration-n} and interpolation,  $\{g^n\}$ is a Cauchy sequence in $L^\infty([0,T],H^{N-\eta}(\mathbb{R}^3_x,L^2(\mathbb{R}^3_v))$ for any $\eta > 0$ and the limit is in $L^\infty([0,T],H^{N}(\mathbb{R}^3_x,L^2(\mathbb{R}^3_v))$. Finally the estimate \eqref{local-est} follows from weak lower semicontinuity.
\end{proof}

%%%%%%%%%%%%%%%%%%%%%%%%%%%%%%%%%%%%%%%%%%%%%%%%%%%%%%%%%%%%%%
\section{Uniform estimate and global  solutions}\label{section-existence}
\setcounter{equation}{0}

Let $g_\eps$ be a local solution of Cauchy problem \eqref{e2},
we use the continuation argument of local solutions to
prove the existence of global solutions in the space $H^N(\RR^3_x; L^2(\RR^3_v)), {  N\ge 2}$. For this, we need to carry  out the smallness assumption of initial data.

\subsection{Microscopic Energy Estimate}
We study firstly the estimate on the microscopic component $g_2$ in the function space ${H^N(\RR^3_x; L^2(\RR^3_v))}$.
For notational simplification, we drop the sub-index $\varepsilon$ of $g$, and also drop $g$ in the notations
$\mathcal{A}, \cE_N, \cC_N, \cD_N$. Actually, we shall establish

\begin{proposition}\label{microenergy}
Let $g\in L^\infty([0,T];H^N(\RR^3_x; L^2(\RR^3_v)))$ be a solution of the equation \eqref{e2} constructed in Theorem \ref{local-solution}, then there exists a constant $C$ independent of $\eps$ such that the following estimate holds:
\begin{equation}\label{micro}
\frac{\mathrm{d}}{\mathrm{d}t}\cE^2_N+\frac{1}{\varepsilon^2}\cD^2_N
\le C\Big\{\frac{1}{\varepsilon}\cE_N \cD_N^2+(\cE_N\cC_N)^2 \Big\}.
\end{equation}
\end{proposition}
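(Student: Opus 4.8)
The plan is to run the standard $L^2$-based energy method directly on \eqref{e2}: apply $\p_x^\alpha$ for each multi-index $|\alpha|\le N$, take the $L^2(\RR^6_{x,v})$ inner product against $\p_x^\alpha g$, and sum over $|\alpha|\le N$. Since the squared $H^N(\RR^3_x;L^2(\RR^3_v))$ norm of $g$ equals $\sum_{|\alpha|\le N}\|\p_x^\alpha g\|_{L^2(\RR^6)}^2$, this produces four contributions: the time-derivative term $\tfrac12\tfrac{\mathrm{d}}{\mathrm{d}t}\cE_N^2$, the streaming term $\tfrac1\eps(v\!\cdot\!\nabla_x\p_x^\alpha g,\p_x^\alpha g)$, the linearized-collision term $\tfrac1{\eps^2}(\cL\p_x^\alpha g,\p_x^\alpha g)_{L^2(\RR^6)}$, and the nonlinear term $\tfrac1\eps(\Gamma(g,g),g)_{H^N(\RR^3_x;L^2(\RR^3_v))}$.

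First I would dispose of the streaming term. Because $v$ is a parameter independent of $x$, we have $(v\!\cdot\!\nabla_x\p_x^\alpha g,\p_x^\alpha g)_{L^2(\RR^6)}=\tfrac12\int_{\RR^6}v\!\cdot\!\nabla_x|\p_x^\alpha g|^2\,\mathrm{d}x\,\mathrm{d}v$, which vanishes after integration by parts in $x$ (the spatial integral of a divergence, using the $H^N$ decay of $g$). Next, since $\pP$ acts only on the velocity variable it commutes with $\p_x^\alpha$, so $(\iI-\pP)\p_x^\alpha g=\p_x^\alpha g_2$; the coercivity estimate \eqref{coercive} then gives $(\cL\p_x^\alpha g,\p_x^\alpha g)_{L^2(\RR^3_v)}\ge C_1|\!|\!|\p_x^\alpha g_2|\!|\!|^2$. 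Integrating in $x$ and summing over $|\alpha|\le N$ turns the linearized-collision term into a genuine dissipation bounded below by $\tfrac{C_1}{\eps^2}\cD_N^2$, recalling that $\cD_N^2=\|g_2\|_{\cX^N}^2=\sum_{|\alpha|\le N}\int_{\RR^3_x}|\!|\!|\p_x^\alpha g_2|\!|\!|^2\,\mathrm{d}x$.

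For the nonlinear term I would invoke Lemma \ref{triple-estimate} with $h=g$, which gives directly
\[
\tfrac1\eps\big(\Gamma(g,g),g\big)_{H^N(\RR^3_x;L^2(\RR^3_v))}\lesssim \tfrac1\eps\,\cE_N\big(\cC_N+\cD_N\big)\cD_N=\tfrac1\eps\,\cE_N\cC_N\cD_N+\tfrac1\eps\,\cE_N\cD_N^2.
\]
Collecting everything yields $\tfrac12\tfrac{\mathrm{d}}{\mathrm{d}t}\cE_N^2+\tfrac{C_1}{\eps^2}\cD_N^2\lesssim \tfrac1\eps\cE_N\cC_N\cD_N+\tfrac1\eps\cE_N\cD_N^2$. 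The decisive step is the treatment of the cross term $\tfrac1\eps\cE_N\cC_N\cD_N$: writing it as $(\cE_N\cC_N)\cdot(\tfrac1\eps\cD_N)$ and applying Young's inequality $ab\le\tfrac{\lambda}{2}a^2+\tfrac1{2\lambda}b^2$, the factor $\eps^{-1}$ is pushed entirely onto the dissipation quantity, so choosing $\lambda$ appropriately absorbs $\tfrac{C_1}{2\eps^2}\cD_N^2$ into the left-hand side and leaves the $\eps$-\emph{independent} remainder $(\cE_N\cC_N)^2$. After this absorption, multiplication by $2$, and a relabelling of the generic constant (normalising the coercivity constant on the dissipation), one arrives at \eqref{micro}.

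I expect the only real subtlety to be this bookkeeping of powers of $\eps$: the cubic term $\tfrac1\eps\cE_N\cD_N^2$ must be kept as is, since absorbing it into the dissipation would require a factor $\eps^{-2}$ that the Young splitting cannot supply; this is precisely why it survives in \eqref{micro}, and it will later be controlled in the global step by the smallness of $\cE_N$ together with $\eps<1$ (so that $\tfrac1\eps\cE_N\cD_N^2=\eps\cE_N\cdot\tfrac1{\eps^2}\cD_N^2$ is reabsorbed). All the genuinely hard analysis — the coercivity of $\cL$ in the non-isotropic norm \eqref{coercive} and the trilinear bound underlying Lemma \ref{triple-estimate} — is already available, so the argument is a careful but essentially routine assembly.
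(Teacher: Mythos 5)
Your proposal is correct and follows essentially the same route as the paper: the streaming term vanishes by integration by parts, the coercivity estimate \eqref{coercive} yields the dissipation $\tfrac{C_1}{\eps^2}\cD_N^2$, Lemma \ref{triple-estimate} with $h=g$ controls the nonlinearity, and Young's inequality splits the cross term $\tfrac1\eps\cE_N\cC_N\cD_N$ into $(\cE_N\cC_N)^2$ plus a small multiple of $\tfrac1{\eps^2}\cD_N^2$ that is absorbed into the left-hand side (the paper's choice $\eta=C_1/2$). Your closing remark about why $\tfrac1\eps\cE_N\cD_N^2$ must be left intact and is only absorbed later via the smallness of $\cE_N$ matches exactly how the paper proceeds in Theorem \ref{part2-energyineq}.
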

\begin{proof}
We apply $\p_x^\alpha $ to \eqref{e2} and take the $L^2(\RR^6_{x,v})$ inner product with
$\p_x^\alpha g$. Since the inner product including $v\!\cdot\!\nabla_xg$ vanishes by
integration by parts, we get
\begin{align}\label{part2-microinner}
\frac 12\frac{\mathrm{d}}{\mathrm{d}t} \cE^2_N +\frac{1}{\varepsilon^2}\sum_{|\alpha|\le N}(\mathcal{L}\p_x^\alpha g,
 \p_x^\alpha g)_{L^2(\RR^6_{x,v})}=\frac{1}{\varepsilon}\sum_{|\alpha|\le N}
(\p_x^\alpha \Gamma(g, g), \p_x^{\alpha}g)_{L^2(\RR^6_{x,v})}.
\end{align}
Note that the above identity makes sense is guaranteed by  \eqref{local-est}. In view of \eqref{coercive},  we have,
$$
\sum_{|\alpha|\le N}(\mathcal{L}\p_x^\alpha g,
 \p_x^\alpha g)_{L^2(\RR^6_{x,v})}\ge C_1 \|g_2\|^2_{\cX^N(\RR^6_{x,v})}=C_1 \cD^2_N.
 $$
Lemma \ref{triple-estimate} implies that for $|\alpha|\le N$,
\begin{equation}\label{part2-nonlinear5}
\begin{aligned}
 &\frac{1}{\eps}\Big|(\p_x^\alpha \Gamma(g, g), \p_x^{\alpha}g)_{L^2(\RR^6_{x,v})}
\Big| \\
\leq &\frac{C_2}{\eps} \cE_N(\cC_N\cD_N+\cD_N^2)\leq
\frac{C_2}{\eps}\cE_N\cD_N^2+  \frac{1}{4 \eta}(\cE_N\cC_N)^2  +\eta\frac{1}{\varepsilon^2}\cD_N^2.
\end{aligned}
\end{equation}
Taking $\eta = \frac{C_1}{2}$, then Proposition \ref{microenergy} can be concluded by plugging  these two estimates into \eqref{part2-microinner}.

\end{proof}

\subsection{Macroscopic energy estimates}

We study now the energy estimate for the macroscopic part $\mathbf{P}g$ where $g$ is a solution of the equation \eqref{e2}. First we decompose the equation \eqref{e2} into microscopic and macroscopic parts, i.e. rewrite it into the following equation
\begin{align}\label{macro-e}
\partial_t\{a+bv+c|v|^2\}\mu^{1/2}&+\frac 1 \varepsilon v\cdot\nabla_{x}\{a+bv+c|v|^2\}\mu^{1/2}\\
&=-\partial_tg_2-\frac 1 \varepsilon v\cdot\nabla_{x}g_2-\frac 1{\varepsilon^2}\mathcal{L}g_2+\frac{1}{\varepsilon}
\Gamma(g_\varepsilon, g_\varepsilon),\notag
\end{align}

\begin{lemm}\label{part2-abc2}
Let $\p^\alpha=\p^\alpha_x, \alpha \in \NN^3, |\alpha| \le N$. If $g$ is a solution of the Boltzmann equation \eqref{e2}, and $\mathcal{A}=(a,b,c)$ defined in \eqref{Pg}, then
\begin{equation}\label{2.111}
\varepsilon\|\p_t\p^\alpha \cA\|_{L^2(\RR^3_x)}\lesssim  \mathcal{C}_N+
\mathcal{D}_N\,.
\end{equation}
\end{lemm}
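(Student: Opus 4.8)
The plan is to derive the local conservation laws for the fluid moments of $g$ and read off $\partial_t\mathcal A$ from them. Testing the Boltzmann equation \eqref{e2} in $L^2(\RR^3_v)$ against each collision invariant $\psi\in\{\sqrt\mu,\,v_i\sqrt\mu,\,|v|^2\sqrt\mu\}$, the dissipation term disappears since $(\mathcal L g,\psi)_{L^2_v}=(g,\mathcal L\psi)_{L^2_v}=0$ for $\psi\in\mathcal N$, and the nonlinear term disappears since $(\Gamma(g,g),\psi)_{L^2_v}=0$ (the orthogonality already used in the proof of Lemma \ref{triple-estimate}). This leaves, for each such $\psi$, a balance of the form
\[
\varepsilon\,\partial_t(g,\psi)_{L^2_v}+\nabla_{\!x}\!\cdot\!(g,v\psi)_{L^2_v}=0 .
\]
Writing $g=g_1+g_2$ with $g_1$ as in \eqref{Pg} and using $g_2\in\mathcal N^\perp$, I would compute the densities explicitly, $(g,\sqrt\mu)=a+3c$, $(g,v_i\sqrt\mu)=b_i$, $(g,|v|^2\sqrt\mu)=3a+15c$, so that the three conserved densities are an invertible linear combination of $(a,b,c)$.

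Next I would split each flux into a macroscopic and a microscopic piece according to whether the velocity weight has a component in $\mathcal N$ or in $\mathcal N^\perp$. Concretely $(g,v_iv_j\sqrt\mu)=(a+5c)\delta_{ij}+(g_2,A_{ij}\sqrt\mu)$ and $(g,v_j|v|^2\sqrt\mu)=5b_j+(g_2,v_j(|v|^2-5)\sqrt\mu)$, the microscopic remainders pairing $g_2$ against the weights $A_{ij}\sqrt\mu$ and a combination of $B_j\sqrt\mu$. Solving the resulting linear system yields expressions of the schematic form
\[
\varepsilon\,\partial_t a=\tfrac12\,\partial_{x_j}(g_2,v_j(|v|^2-5)\sqrt\mu)_{L^2_v},\qquad
\varepsilon\,\partial_t b_i=-\partial_{x_i}(a+5c)-\partial_{x_j}(g_2,A_{ij}\sqrt\mu)_{L^2_v},
\]
and $\varepsilon\,\partial_t c=-\tfrac13\nabla_{\!x}\!\cdot\! b+(\text{microscopic flux})$. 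The crucial algebraic point is that in the balance for $a$ the $\nabla_{\!x}\!\cdot\! b$ contributions coming from the mass and energy laws cancel, so $\partial_t a$ is driven purely by a spatial derivative of a velocity moment of $g_2$.

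The final step is to apply $\partial^\alpha$, take the $L^2(\RR^3_x)$ norm, and estimate each term. The macroscopic contributions are gradients of $\mathcal A$, hence controlled by $\|\nabla_{\!x}\mathcal A\|_{H^{N-1}}=\mathcal C_N$; note that only gradients of $\mathcal A$ ever appear, never $\mathcal A$ itself, which is exactly what $\mathcal C_N$ measures. For the microscopic contributions I would bound, pointwise in $x$, a velocity moment by the non-isotropic norm through the lower bound in \eqref{equivalent},
\[
\big|(\partial^\beta g_2,\phi)_{L^2_v}\big|\le\|\partial^\beta g_2\|_{L^2_{s+\gamma/2}(\RR^3_v)}\,\|\phi\|_{L^2_{-s-\gamma/2}(\RR^3_v)}\lesssim |\!|\!|\partial^\beta g_2|\!|\!|,
\]
using the Gaussian decay of the weights $\phi\in\{A_{ij}\sqrt\mu,\,v_j(|v|^2-5)\sqrt\mu\}$; integrating in $x$ gives a bound by $\|g_2\|_{\mathcal X^{|\beta|}}\le\mathcal D_N$. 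I expect the main obstacle to be the derivative bookkeeping at the top order: each fluid balance carries one extra $x$-derivative through the flux divergence $\nabla_{\!x}\!\cdot\!(\cdots)$, so one must track carefully that the highest derivative of $\mathcal A$ remains inside $\mathcal C_N$ and that of $g_2$ inside $\mathcal D_N$. The cancellation isolating $\partial_t a$ and the fact that $\mathcal C_N$ controls gradients of $\mathcal A$ (rather than $\mathcal A$) are precisely the structural features that allow the orders to match and the estimate to close.
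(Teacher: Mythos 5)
Your proposal is correct and follows essentially the same route as the paper: test \eqref{e2} against the collision invariants to get the local conservation laws, observe that the dissipative and nonlinear terms drop out, read off $\varepsilon\,\partial_t\mathcal{A}$ as $\nabla_x$ of macroscopic quantities plus $\nabla_x$ of velocity moments of $g_2$, and bound the former by $\mathcal{C}_N$ and the latter by $\mathcal{D}_N$ via the lower bound in \eqref{equivalent}. The derivative-bookkeeping worry you raise at top order is real but is shared by the paper's own statement (the flux divergence costs one derivative, so the bound as written genuinely requires $|\alpha|\le N-1$, which is exactly how the lemma is invoked later in Lemma \ref{Macro-estimate}).
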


\begin{proof}
Let $g$ be a solution to a solution of the scaled Boltzmann equation \eqref{e2}. The local conservation laws are given by
\begin{equation}\label{conservation laws}
\begin{aligned}
&\partial_t a = \frac{1}{2\eps}(v\!\cdot\! \grad g_2, |v|^2\sqrt{\mu})_{L^2(\mathbb{R}^3_v)}\,,\\
&\partial_t b + \frac{1}{\eps}(\grad a + 5 \grad c)= -\frac{1}{\eps}(v\!\cdot\! \grad g_2, v\sqrt{\mu})_{L^2(\mathbb{R}^3_v)}\,,\\
&\partial_t c + \frac{1}{3\eps}\grad\!\cdot\! b= -\frac{1}{\eps}(v\!\cdot\! \grad g_2, |v|^2\sqrt{\mu})_{L^2(\mathbb{R}^3_v)}\,,
\end{aligned}
\end{equation}
from which we can deduce that
$$
\|\p_t\p^\alpha \cA\|_{L^2(\RR^3_x)}
 \lesssim \frac 1 \varepsilon   \|\nabla_x\p^\alpha  \cA\|_{L^2(\RR^3_x)}+
\frac 1 \varepsilon \|\nabla_x\p^\alpha g_2\|_{L^2(\RR^3_{x}; L^2_{s+\gamma/2}(\RR^3_v))}.
$$
This completes the proof of the lemma by using \eqref{equivalent}. In particular, from the first equation of \eqref{conservation laws}, we have
\begin{equation}\label{local a}
\varepsilon\|\p_t\p^\alpha a\|_{L^2(\RR^3_x)}\lesssim
\mathcal{D}_N\,.
\end{equation}
\end{proof}

Next, we put the so-called 13-moments
\begin{equation}\label{ecomp}
  \{ e_j \}^{13}_{j=1}=
\left\lbrace  \mu^{1/2} , v_i \mu^{1/2} ,v_iv_j\mu^{1/2} , v_i|v|^2\mu^{1/2} \right\rbrace\,.
\end{equation}
This set of functions spans a 13-dimensional subspace of $L^2(\RR^3_v)$. Let
 $\{e ^*_k\}_{k=1}^{13}$ be  a corresponding bi-orthogonal basis,
i.e.  a basis such that
\[
(e^*_j, e^*_k)_{L^2(\RR^3_v)}=\delta_{j,k}, \qquad j,k=1,\cdots,13,
\]
hold.
Of course $e^*_k$ is given as a linear combination of
\eqref{ecomp}.
It is well-known \cite{guo-1} that the macroscopic component
$
g_1=\pP g\sim \cA=(a,b,c),
$
satisfies the following set of equations
\begin{equation}\label{part2-macroeq}
\left\{
\begin{array}{rrl}
v \,| v|^2 \mu^{1/2} :&\frac{1}{\varepsilon}\nabla_x c &= -\partial_t r_c+\frac{1}{\varepsilon}m_c+\frac{1}{\varepsilon^2}l_c + \frac{1}{\varepsilon}h_c,
\\
v^2_i \mu^{1/2}:&\partial_t c +\frac{1}{\varepsilon}\partial_ib_i &= -\partial_t r_i+\frac{1}{\varepsilon}m_i+\frac{1}{\varepsilon^2}l_i + \frac{1}{\varepsilon}h_i ,
\\
 v_iv_j \mu^{1/2}:&\frac{1}{\varepsilon}\partial_ib_j + \frac{1}{\varepsilon} \partial_j b_i &
= -\partial_t r_{ij}+\frac{1}{\varepsilon}m_{ij}+\frac{1}{\varepsilon^2}l_{ij} + \frac{1}{\varepsilon}h_{ij} , \quad i\neq j,
\\
 v_i \mu^{1/2} :&\partial_t b_i + \frac{1}{\varepsilon}\partial_i a &
= -\partial_t r_{bi}+\frac{1}{\varepsilon}m_{bi}+\frac{1}{\varepsilon^2}l_{bi} + \frac{1}{\varepsilon}h_{bi},
\\
 \mu^{1/2} :& \partial_t a &= -\partial_t r_a+\frac{1}{\varepsilon}m_a+\frac{1}{\varepsilon^2}l_a +\frac{1}{\varepsilon} h_a .
\end{array}
\right.
\end{equation}
In fact,
one obtains each equation of the second column, if one multiplies \eqref{macro-e} by such an $e^*_j$ and integrating in
$v$, where
 $r_c, \cdots, h_a$ are the inner products  of the form
\begin{align}\label{rmh}
&r=(g_2,e^*)_{L^2(\RR^3_v)},\,\,
m=-(v\cdot\nabla_x g_2, e^*)_{L^2(\RR^3_v)},\, h=(\Gamma(g,g),e^*)_{L^2(\RR^3_v)},\\
&l=-(\mathcal{L} g_2,e^*)_{L^2(\RR^3_v)},\label{l}
\end{align}
in which  $e^*$ stands for the corresponding  $e^*_j$.
Here and below we drop the index $c, i, ij,b_i,a$ since the computations are similar.

The next lemma gives estimates on the various terms involved in the right-hand side of the macroscopic system \eqref{part2-macroeq}. Moreover, in the left-hand side of the following estimates, $r, m, l$ and $h$ stand for one of the corresponding terms of the macroscopic system as explained above.

\begin{lemma} \label{part2-rmh}
Let $r,m,l,h$ be the ones  defined by \eqref{rmh} with $e^*$ replaced by
any linear combination of the basis functions $e^*_j$.
 Let $\p^\alpha=\p^\alpha_x$, $\p_i=\p_{x_i}$, $|\alpha|\le N-1$. Then, one has
\begin{align}
&\|\p_i\p^\alpha r\,\|_{L^2(\RR^3_x)}
\lesssim\min\{\|g_2\|_{H^{N}(\RR^3_x; L^2(\RR^3_v))},\,\mathcal{D}_{N}\},\label{part2-D1}
\\
&\|\p^\alpha m\,\|_{L^2(\RR^3_x)}
\lesssim\min\{\|g_2\|_{H^{N}(\RR^3_x; L^2(\RR^3_v))},\,\mathcal{D}_{N}\},\label{part2-D2}
\\
&\|\p^\alpha l\,\|_{L^2(\RR^3_x)}
\lesssim \min\{\|g_2\|_{H^{N-1}(\RR^3_x; L^2(\RR^3_v))},\,\mathcal{D}_{N-1}\}
\label{part2-D3},
\\
 &\label{part2-Dh}
\|\p^\alpha   h\|_{L^2(\RR^3_x)}\lesssim \cE_{N-1}(\cC_{N-1}+ \cD_{N-1}).
\end{align}
\end{lemma}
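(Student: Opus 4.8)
The plan is to treat all four quantities as velocity moments of $g_2$ (respectively of $\mathcal{L}g_2$ and $\Gamma(g,g)$) tested against the \emph{fixed}, rapidly decaying functions $e^*$, each a polynomial in $v$ times $\sqrt{\mu}$. Two mechanisms recur. First, since $g_2=(\iI-\pP)g\in\mathcal{N}^\perp$, any $\mathcal{N}$-component of the test function is annihilated, so $e^*$ may be replaced by $(\iI-\pP)e^*$. Second, a weighted Cauchy–Schwarz inequality converts each velocity inner product into a factor $\|\,\cdot\,\|_{L^2_{s+\gamma/2}(\RR^3_v)}$ of $g_2$, which the lower bound in \eqref{equivalent} controls by the triple norm and hence, after summation, by $\cD_\ast$; carrying out the same inner products in plain $L^2_v$ produces the $\|g_2\|_{H^\ast}$ alternative in the minimum. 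The only genuinely nonlinear term is $h$, whose estimate \eqref{part2-Dh} reproduces the bookkeeping of Lemma \ref{triple-estimate} with the third slot frozen to $e^*$.

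For \eqref{part2-D1} I write $\p_i\p^\alpha r=(\p_i\p^\alpha g_2,(\iI-\pP)e^*)_{L^2(\RR^3_v)}$ and estimate
\[
|\p_i\p^\alpha r|\le \|\p_i\p^\alpha g_2\|_{L^2_{s+\gamma/2}(\RR^3_v)}\,\big\|\langle v\rangle^{-(s+\gamma/2)}(\iI-\pP)e^*\big\|_{L^2(\RR^3_v)},
\]
the last factor being a finite constant because $(\iI-\pP)e^*$ decays like $\sqrt{\mu}$. Taking $L^2(\RR^3_x)$ norms, using $|\alpha|+1\le N$ and the lower bound in \eqref{equivalent} gives the $\cD_N$ branch; the plain (unweighted) Cauchy–Schwarz gives the $\|g_2\|_{H^N}$ branch. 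Estimate \eqref{part2-D2} is identical with $e^*$ replaced by $v_j e^*$ (still a polynomial times $\sqrt{\mu}$), the single $x$-derivative inside $m$ accounting for the index $N$.

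For \eqref{part2-D3} I use $\mathcal{L}g_2=-\Gamma(\sqrt{\mu},g_2)-\Gamma(g_2,\sqrt{\mu})$, so that $\p^\alpha l=(\Gamma(\sqrt{\mu},\p^\alpha g_2)+\Gamma(\p^\alpha g_2,\sqrt{\mu}),e^*)_{L^2(\RR^3_v)}$. Applying \eqref{trilinear} to the first term gives $\lesssim\|\sqrt{\mu}\|_{L^2}\,|\!|\!|\p^\alpha g_2|\!|\!|\;|\!|\!|e^*|\!|\!|\lesssim|\!|\!|\p^\alpha g_2|\!|\!|$, and \eqref{trilinear-b} to the second gives $\lesssim(\|\p^\alpha g_2\|_{L^2_{s+\gamma/2}}+|\!|\!|\p^\alpha g_2|\!|\!|)\lesssim|\!|\!|\p^\alpha g_2|\!|\!|$ by \eqref{equivalent}; summing over $|\alpha|\le N-1$ yields the $\cD_{N-1}$ branch. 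For the $\|g_2\|_{H^{N-1}}$ branch I instead invoke self-adjointness of $\mathcal{L}$ to write $\p^\alpha l=-(\p^\alpha g_2,\mathcal{L}e^*)_{L^2(\RR^3_v)}$ and note that $\mathcal{L}e^*\in L^2(\RR^3_v)$ is a fixed function, since the collision frequency grows at most like $\langle v\rangle^{\gamma}$ with $\gamma>-3$, whence $|\p^\alpha l|\lesssim\|\p^\alpha g_2\|_{L^2(\RR^3_v)}$.

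The main work is \eqref{part2-Dh}. Expanding $\p^\alpha\Gamma(g,g)$ by Leibniz and decomposing each argument as $g=g_1+g_2$ produces four families of terms, the exact analogues of $J^{11},J^{12},J^{21},J^{22}$ in Lemma \ref{triple-estimate}, except that the third argument is now the fixed $e^*$, so every occurrence of $|\!|\!|\p^\alpha h_2|\!|\!|$ there becomes the constant $|\!|\!|e^*|\!|\!|$. The purely macroscopic family reduces, via \eqref{trilinear} and $(\Gamma(\varphi_k,\varphi_m),e^*)_{L^2(\RR^3_v)}=\mathrm{const}$, to $\|\p^\alpha\cA^2\|_{L^2(\RR^3_x)}$, which \eqref{2.1112} at level $N-1$ bounds by $\cC_{N-1}\cE_{N-1}$; the mixed and purely microscopic families are controlled by $\cE_{N-1}\cD_{N-1}$ using \eqref{trilinear}, \eqref{trilinear-b} and the Sobolev embedding $H^{N-1}(\RR^3_x)\hookrightarrow L^\infty(\RR^3_x)$, exactly as in Lemma \ref{triple-estimate}. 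Summing gives \eqref{part2-Dh}. I expect the delicate point to be precisely this last term at top order: with only $|\alpha|\le N-1$ derivatives available and $N\ge 2$ permitted, the derivatives must be distributed so that at most one factor sits in $L^\infty_x$ and the gradient structure of \eqref{2.1112} — which is what yields the favorable factor $\cC_{N-1}$ in place of $\cE_{N-1}$ — is preserved; the low-regularity cases $N=2,3$ will require the Gagliardo–Nirenberg product estimates to be applied directly rather than quoting \eqref{2.1112} verbatim.
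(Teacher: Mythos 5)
Your proposal is correct and follows essentially the same route as the paper: weighted Cauchy--Schwarz in $v$ against the rapidly decaying test functions $e^*$ (with the weight $\ell=0$ or $\ell=s+\gamma/2$ giving the two branches of the minimum via \eqref{equivalent}), the adjoint trick for $l$, and a rerun of the proof of Lemma \ref{triple-estimate} with the third slot frozen for \eqref{part2-Dh}, which the paper simply cites as a direct consequence of \eqref{upper-B}. The only deviations are harmless: your use of $(\iI-\pP)e^*$ is unnecessary, and your trilinear-estimate route to the $\cD_{N-1}$ branch of \eqref{part2-D3} replaces the paper's uniform use of the adjoint plus weighted Cauchy--Schwarz; your remark that \eqref{2.1112} is stated for $N\ge 3$ while the lemma is invoked down to $N=2$ correctly flags a small inconsistency already present in the paper itself.
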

\noindent
\begin{proof}
Since $e^*_j$ can be expressed as a linear combination
of basis functions $\{e_i\}$, we may compute $r, m, h$ with $e^*$ any linear combination $e$ of $\{e_i\}$. Remark that $e=\tilde{e}\sqrt{\mu}$, then, for any $\ell\in\RR$,
\begin{align*}
\|\p_i\p^\alpha r\,\|_{L^2(\RR^3_x)}
&=
\| (\p_i\p^\alpha g_2, e)_{L^2(\RR^3_v)}\|_{L^2(\RR^3_x)}
\\&\lesssim
\|\ \|\p_i\p^\alpha  g_2\|_{L^2_\ell(\RR^3_v)}
\|_{L^2(\RR^3_{x})}\lesssim \|g_2\|_{H^{N}(\RR^3_x; L^2_\ell(\RR^3_v))},
\end{align*}
\begin{align*}
\|\p^\alpha l\,\|_{L^2(\RR^3_x)}
&=
\| (\cL(\p^\alpha g_2), e)_{L^2(\RR^3_v)}\|_{L^2(\RR^3_x)}
\\&=
\| ( \p^\alpha g_2,\cL^* e)_{L^2(\RR^3_v)}\|_{L^2(\RR^3_x)}
\lesssim \| \p^\alpha g_2\|_{L^{2}(\RR^3_x; L^2_\ell(\RR^3_v))},
\end{align*}
\begin{align*}
\|\p^\alpha   m\,\|_{L^2_{x}}&=
 \| (\nabla_x\p^\alpha g_2, \,v\,e)_{L^2(\RR^3_v)}\|_{L^2(\RR^3_x)}\\
& \lesssim \| \nabla_x\p^\alpha g_2 \|_{L^2(\RR^3_x; L^2_\ell(\RR^3_v))}
\lesssim \|g_2\|_{H^{N}(\RR^3_x; L^2_\ell(\RR^3_v))},
\end{align*}
chose $\ell=0$ and $\ell=s+\gamma/2$, thus \eqref{equivalent} imply \eqref{part2-D1},
\eqref{part2-D2} and \eqref{part2-D3}.
The estimate \eqref{part2-Dh} is a direct consequence of \eqref{upper-B}, since $h$ is computed as follows.
\[
h=(\Gamma(g, g), e).
\]
\end{proof}

Using the previous three lemmas, we are now able to prove our first differential inequality, which estimates $\alpha +1$ derivatives of the macroscopic part $\cA$ in terms of the microscopic part $g_2$, for $|\alpha | \leq N-1$. Below, on the left-hand side, $r$ stands for the vector of all the previous $r$.

\begin{lemma}\label{Macro-estimate}
Let $|\alpha|\le N-1$, and let $g$ be a solution of the scaled Boltzmann equation \eqref{e2}. Then there exists a positive constant $\widetilde{C}$ independent of $\eps$, such that the following estimate holds:
\begin{align}\label{part2-pabc}
\varepsilon\frac{\mathrm{d}}{\mathrm{d}t}\Big\{(\p^\alpha r,&\nabla_x \p^\alpha (a, - b, c))_{L^2(\RR^3_x)}
+(\p^\alpha b, \nabla_x \p^\alpha a)_{L^2(\RR^3_x)}\Big\}
+ C^2_{N}
\\
&\hspace{2cm}\notag
\le \widetilde{C}\Big\{\frac{1}{\varepsilon^2} \cD^2_{N}+\cE_{N}(\cC^2_{N}+\cD^2_{N})\Big\}.
\end{align}
\end{lemma}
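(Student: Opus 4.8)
The plan is to run the standard macroscopic (local conservation law) energy estimate, testing each of the five families of equations in \eqref{part2-macroeq} against a first-order spatial derivative of the corresponding macroscopic field and summing over $|\alpha|\le N-1$. Concretely, I would fix $|\alpha|\le N-1$, apply $\p^\alpha$ to \eqref{part2-macroeq}, and pair the $v|v|^2\mu^{1/2}$-equation with $\nabla_x\p^\alpha c$, the $v_iv_j\mu^{1/2}$- and $v_i^2\mu^{1/2}$-equations with $\p_i\p^\alpha b_j$, and the $v_i\mu^{1/2}$-equation with $\p_i\p^\alpha a$. After multiplying through by $\eps$, the principal terms on the left produce $\|\nabla_x\p^\alpha c\|^2$, $\|\nabla_x\p^\alpha a\|^2$, and, after summing over $i,j$ and using the identity $\sum_{i,j}(\p^\alpha(\p_i b_j+\p_j b_i),\p_i\p^\alpha b_j)=\|\nabla_x\p^\alpha b\|^2+\|\nabla_x\!\cdot\!\p^\alpha b\|^2$ (two integrations by parts in $x$), the quantity $\|\nabla_x\p^\alpha b\|^2$. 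Summing over $|\alpha|\le N-1$ assembles the full $C_N^2$ on the left.

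The right-hand sides of \eqref{part2-macroeq} split into three groups. The microscopic inner products $m$ and $l$ satisfy $\|\p^\alpha m\|,\|\p^\alpha l\|\lesssim\cD_N$ by \eqref{part2-D2}--\eqref{part2-D3}; paired against a factor $\nabla_x\p^\alpha(\cdot)$ of size $\lesssim\cC_N$ and carrying at most a $\tfrac1\eps$ after the overall multiplication by $\eps$, Young's inequality sends them into $\eta\cC_N^2+C_\eta\tfrac1{\eps^2}\cD_N^2$ (the $\tfrac1\eps$ always being placed on the $\cD_N$-factor). The collision term obeys $\|\p^\alpha h\|\lesssim\cE_N(\cC_N+\cD_N)$ by \eqref{part2-Dh}, which after pairing and Young yields exactly the cubic error $\cE_N(\cC_N^2+\cD_N^2)$. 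Each $\p_t r$ term is converted by the product rule into a total time derivative $\eps\frac{d}{dt}(\p^\alpha r,\nabla_x\p^\alpha(\cdot))$ --- this is what builds the functional $(\p^\alpha r,\nabla_x\p^\alpha(a,-b,c))$ (up to the sign bookkeeping encoded in the $(a,-b,c)$ pattern) --- plus a remainder $\eps(\p^\alpha r,\nabla_x\p_t\p^\alpha(\cdot))$ that is harmless, since $\|\p^\alpha r\|\lesssim\cD_N$ is microscopic while $\eps\|\nabla_x\p_t\p^\alpha(\cdot)\|\lesssim\cC_N+\cD_N$ by Lemma \ref{part2-abc2}.

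The genuine obstacle is the pair of time-derivative-of-macroscopic terms, $\p_t b$ in the $\nabla_x a$-equation and $\p_t c$ in the $\nabla_x b$-equation, because there $\eps\|\p_t(b,c)\|$ is only of size $\cC_N+\cD_N$ (not $\cD_N$-small), so a naive pairing with $\nabla_x a$ or $\nabla_x b$ leaves a non-absorbable $\cC_N^2$. The key point is that these two must be handled by \emph{different} devices. For $\p_t b$ I would integrate by parts in time, $-\eps(\p_t\p^\alpha b_i,\p_i\p^\alpha a)=-\eps\frac{d}{dt}(\p^\alpha b_i,\p_i\p^\alpha a)+\eps(\p^\alpha b_i,\p_i\p_t\p^\alpha a)$; the first term is precisely the second functional $(\p^\alpha b,\nabla_x\p^\alpha a)$ in the statement, and in the remainder I would move $\p_i$ off $a$ so the factor becomes $\eps\p_t\p^\alpha a$, controlled by $\cD_N$ alone through the sharp bound \eqref{local a}, giving $\lesssim\cC_N\cD_N$. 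For $\p_t c$, time integration by parts would instead create a spurious functional $(\p^\alpha c,\nabla_x\!\cdot\!\p^\alpha b)$ with a bad remainder; so I would instead substitute the energy conservation law $\eps\p_t c=-\tfrac13\nabla_x\!\cdot\! b-(v\!\cdot\!\nabla_x g_2,|v|^2\sqrt\mu)$ from \eqref{conservation laws}. The $\nabla_x\!\cdot\! b$ part then combines with the $\|\nabla_x\!\cdot\!\p^\alpha b\|^2$ already present to leave a \emph{favorable} $+\tfrac13\|\nabla_x\!\cdot\!\p^\alpha b\|^2$ on the left, while the leftover moment is an $m$-type term bounded by $\cD_N$ and absorbed as above.

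Finally I would collect the favorable squares $\|\nabla_x\p^\alpha(a,b,c)\|^2$, sum over $|\alpha|\le N-1$ (and over the component equations), choose $\eta$ small enough to absorb the $\eta\cC_N^2$ back into the left, and bound each plain $\cD_N^2$ by $\tfrac1{\eps^2}\cD_N^2$ (legitimate since $\eps<1$), which produces \eqref{part2-pabc}. I expect the handling of $\p_t c$ --- recognizing that it must be eliminated by substitution of the conservation law so as to generate a square in $\nabla_x\!\cdot\! b$ rather than a spurious time-derivative functional, in contrast to $\p_t b$ which must instead be integrated by parts in time --- to be the main subtlety of the argument.
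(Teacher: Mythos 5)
Your proposal follows essentially the same route as the paper's proof: the same pairings of the moment equations with $\nabla_x\p^\alpha a$, $\nabla_x\p^\alpha b$, $\nabla_x\p^\alpha c$, the same time-integrations-by-parts producing the two functionals $(\p^\alpha r,\nabla_x\p^\alpha(a,-b,c))$ and $(\p^\alpha b,\nabla_x\p^\alpha a)$, the same use of \eqref{local a} and Lemma \ref{part2-abc2} for the leftover time-derivative factors, and the same Young-inequality absorption of $\eta\,\cC_N^2$. The only cosmetic difference is how $\p_t c$ is eliminated in the $b$-estimate: you substitute the $c$-conservation law to generate the favorable $\|\nabla_x\!\cdot\!\p^\alpha b\|^2$ term, while the paper forms the combination $\sum_{j\ne i}\p_j(\p_jb_i+\p_ib_j)+\p_i(2\p_ib_i-\sum_{j\ne i}\p_jb_j)$ in which the two $\p_t c$ contributions cancel identically --- the same algebra, since the $c$-conservation law is precisely the trace of the diagonal moment equations.
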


\begin{proof}
Recall that
$$
\cC^2_{N}=\|\nabla_x \p^\alpha \cA\|_{L^2(\RR^3_x)}^2=\|\nabla_x \p^\alpha a\|_{L^2(\RR^3_x)}^2+
\|\nabla_x \p^\alpha b\|_{L^2(\RR^3_x)}^2+\|\nabla_x \p^\alpha c\|_{L^2(\RR^3_x)}^2.
$$
{\bf (a) Estimate of $\nabla_x\p^\alpha  a$}. From the macroscopic equations \eqref{part2-macroeq},
\begin{align*}
& \|\nabla_x \p^\alpha a\|_{L^2(\RR^3_x)}^2
=(\nabla_x \p^\alpha a,\nabla_x \p^\alpha a)_{L^2(\RR^3_x)}
\\
&
= (\p^\alpha (-\varepsilon\p_t b-\varepsilon\p_tr + m+\frac 1 {\varepsilon}l+h),\nabla_x \p^\alpha a)_{L^2(\RR^3_x)}
\\&\lesssim \varepsilon R_1+|(\p^\alpha m, \nabla_x \p^\alpha a)_{L^2(\RR^3_x)}|+\frac{1}{\varepsilon}|(\p^\alpha l, \nabla_x \p^\alpha a)_{L^2(\RR^3_x)}|+|(\p^\alpha h, \nabla_x \p^\alpha a)_{L^2(\RR^3_x)} |.
\end{align*}
Here,
\begin{align*}
\varepsilon R_1&=-\varepsilon (\p^\alpha \p_tb+\p^\alpha \p_tr,\nabla_x \p^\alpha a)_{L^2(\RR^3_x)}
\\&=-\varepsilon \frac{\mathrm{d}}{\mathrm{d}t}(\p^\alpha (b+r),\nabla_x \p^\alpha a)_{L^2(\RR^3_x)}-
\varepsilon(\nabla_x\p^\alpha (b+r), \p_t \p^\alpha a)_{L^2(\RR^3_x)}\,.
\end{align*}
Note that the estimate \eqref{local a}, $\varepsilon(\nabla_x\p^\alpha r, \p_t \p^\alpha a)_{L^2(\RR^3_x)} \lesssim \cD^2_N$, and
\begin{equation}\nonumber
\varepsilon(\nabla_x\p^\alpha b, \p_t \p^\alpha a)_{L^2(\RR^3_x)} \lesssim \eta \|\grad \p^\alpha b\|^2_{L^2(\mathbb{R}^3_x)} + \frac{1}{4\eta}\cD^2_N\,.
\end{equation}
Furthermore, Lemma \ref{part2-rmh} implies that
\begin{align*}
&|(\p^\alpha m, \nabla_x \p^\alpha a)_{L^2(\RR^3_x)}|\le \cD_{N}\|\nabla_x\cA\|_{H^{N-1}(\RR^3_x)}\lesssim \cD_{N}\cC_{N},\\
&\frac{1}{\eps}|(\p^\alpha l, \nabla_x \p^\alpha a)_{L^2(\RR^3_x)}|\lesssim \frac{1}{\eps}\cD_{N-1}\cC_{N} \leq \frac{1}{\eps}\cD_{N}\cC_{N},\\
&|(\p^\alpha h, \nabla_x \p^\alpha a)_{L^2(\RR^3_x)}|\lesssim \cE_{N-1}(\cC_{N-1}+\cD_{N-1})\cC_{N}\,.
\end{align*}
Hence, for some small $0<\eta<1$,
\begin{align*}
&\varepsilon \frac{\mathrm{d}}{\mathrm{d}t}(\p^\alpha (b+r),\nabla_x \p^\alpha a)_{L^2(\RR^3_x)}+ \|\nabla_x \p^\alpha a\|_{L^2(\RR^3_x)}^2\notag\\
&\quad\lesssim
\eta \|\nabla_x\p^\alpha b\|^2_{L^2(\RR^3_x)} + \frac{1}{4\eta} \cD^2_{N}
+\eta (\cC^2_{N}
+ \cD^2_{N}\big)\\
&\quad\quad+\cD_{N}\mathcal{C}_{N}+
\frac 1 \varepsilon \cD_{N}\mathcal{C}_{N} +\cE_{N}(\cC_{N}+\cD_{N})\mathcal{C}_{N}\notag
\\
&\quad\lesssim
\frac{1}{4\eta \varepsilon^2} \cD^2_{N}
+\eta \cC^2_{N}+\cE_{N}(\cC^2_{N}+\cD^2_{N})\notag
\end{align*}
Thus,
\begin{align}
&\varepsilon \frac{\mathrm{d}}{\mathrm{d}t}(\p^\alpha (b+r),\nabla_x \p^\alpha a)_{L^2(\RR^3_x)}+ \|\nabla_x \p^\alpha a\|_{L^2(\RR^3_x)}^2\label{3.15}\\
&\quad\le \eta \cC^2_{N}+
 \frac{C}{\eta}\big\{\frac 1 {\varepsilon^2} \cD^2_{N}
+\cE_{N}(\cC^2_{N}+\cD^2_{N})\big\}\,,\notag
\end{align}
where $C>0$ independent of $\eps$ and $\eta$.

\noindent{\bf (b) Estimate of $\nabla_x\p^\alpha  b$}.
Recall $b=(b_1,b_2,b_3)$. From \eqref{part2-macroeq} ,
\begin{align*}
\Delta_x\p^\alpha b_i+\p^2_{i}\p^\alpha   b_i&=\p^{\alpha}\Big[
\sum_{j \ne i} \p_j ( \p_j b_i + \p_i b_j)
+ \p_i(2  \p_i b_i - \sum_{j\ne i} \p_j  b_j)\Big]
\\&=\p^{\alpha}\Big[
\nabla_x  (-\varepsilon\p_t r +m+\frac{1}{\varepsilon}l+h)
- \p_i(2\p_t c-2 \p_t c)\Big],
\end{align*}
where $r,l,h$ stands for  linear combinations of $r_i,l_i,h_i$ and $r_{ij},l_{ij},h_{ij}$ for $ i,j=1,2,3$ respectively. Then
\begin{align*}
&\|\nabla_x\p^\alpha  b_i\|_{L^2(\RR^3_x)}^2+\|\p_i\p^\alpha  b_i\|_{L^2(\RR^3_x)}^2=
-(\Delta_x\p^\alpha b_i+\p^2_{i}\p^\alpha   b_i, \p^\alpha  b_i)_{L^2(\RR^3_x)}\\
&=\varepsilon R_2+R_3+R_4+R_5,
\end{align*}
where
\begin{align*}
\varepsilon R_2&=-\varepsilon(\nabla_x\p^\alpha \p_t  r,\p^\alpha  b_i)_{L^2(\RR^3_x)}\\
&=-\varepsilon\frac{\mathrm{d}}{\mathrm{d}t}(\p^\alpha   r, -\nabla_x\p^\alpha  b_i)_{L^2(\RR^3_x)}
-\varepsilon(\p^\alpha   r,\p_t\nabla_x\p^\alpha  b_i)_{L^2(\RR^3_x)}
\\&\hspace*{1cm}\lesssim
-\varepsilon\frac{\mathrm{d}}{\mathrm{d}t}(\p^\alpha   r, \ \nabla_x\p^\alpha  b_i)_{L^2(\RR^3_x)}+
\frac{1}{4\eta} \cD^2_{N}
+\eta\varepsilon^2
\|\p_t\p^\alpha  b_i\|^2_{L^2(\RR^3_x)},
\\
R_3&=-(\p^\alpha   m, \ \nabla_x\p^\alpha  b_i)_{L^2(\RR^3_x)}\lesssim \frac{1}{4\eta} \cD^2_{N}
+
\eta
\|\nabla_x\p^\alpha  b_i\|^2_{L^2(\RR^3_x)}
,
\\
R_4&=-\frac{1}{\varepsilon}(\p^\alpha   l, \ \nabla_x\p^\alpha  b_i)_{L^2(\RR^3_x)}\lesssim
\frac{1}{\varepsilon}\cC_{N}\cD_{N} \leq \frac{1}{\varepsilon}\cC_{N}\cD_{N},
\\R_5&=-(\p^\alpha   h,\ \nabla_x\p^\alpha  b_i)_{L^2(\RR^3_x)}\lesssim
\cE_{N}(\cC^2_{N}+\cD^2_{N}).
\end{align*}
Thus
\begin{align}
&\varepsilon \frac{\mathrm{d}}{\mathrm{d}t}(\p^\alpha r, -\nabla_x \p^\alpha b)_{L^2(\RR^3_x)}+ \|\nabla_x \p^\alpha b\|_{L^2(\RR^3_x)}^2\label{3.16}\\
&\quad\le \eta \cC^2_{N}+
 \frac{C}{\eta}\big\{\frac 1 {\varepsilon^2} \cD^2_{N}
+\cE_{N}(\cC^2_{N}+\cD^2_{N})\big\}\,,\notag
\end{align}
where $C>0$ independent of $\eps$ and $\eta$.

\noindent{\bf (c) Estimate of $\nabla_x\p^\alpha  c$}. {}From \eqref{part2-macroeq} ,
 \begin{align*}
 \|\nabla_x \p^\alpha &c\|_{L^2(\RR^3_x)}^2
=(\nabla_x \p^\alpha c,\nabla_x \p^\alpha c)_{L^2(\RR^3_x)}\\
&= (\p^\alpha (-\varepsilon\p_tr+m+\frac{1}{\varepsilon}l+h),\nabla_x \p^\alpha c)_{L^2(\RR^3_x)}
\\&
\lesssim \varepsilon R_6+\eta \cC^2_{N}
+\cD^2_{N}+\frac{1}{\varepsilon^2}\cD^2_{N}+\cE_{N}(\cC^2_{N}+\cD^2_{N}),
\end{align*}
where
\begin{align*}
\varepsilon R_6&=-\varepsilon(\p^\alpha \p_tr,\nabla_x \p^\alpha c)_{L^2(\RR^3_x)}=
- \varepsilon\frac{\mathrm{d}}{\mathrm{d}t}(\p^\alpha r,\nabla_x \p^\alpha c)_{L^2(\RR^3_x)}-\varepsilon
(\nabla_x\p^\alpha r, \p_t \p^\alpha c)_{L^2(\RR^3_x)}
\\&\quad
\lesssim
-\varepsilon\frac{\mathrm{d}}{\mathrm{d}t}(\p^\alpha r,\nabla_x \p^\alpha c)_{L^2(\RR^3_x)}+
\frac{1}{4\eta}\, \cD^2_{N}
+\eta \varepsilon^2
\|\p_t\p^\alpha  c\|^2_{L^2(\RR^3_x)}.
\end{align*}
Thus
\begin{align}
&\varepsilon \frac{\mathrm{d}}{\mathrm{d}t}(\p^\alpha r, -\nabla_x \p^\alpha c)_{L^2(\RR^3_x)}+ \|\nabla_x \p^\alpha c\|_{L^2(\RR^3_x)}^2\label{3.17}\\
&\quad\le \eta \cC^2_{N}+
 \frac{C}{\eta}\big\{\frac 1 {\varepsilon^2} \cD^2_{N}
+\cE_{N}(\cC^2_{N}+\cD^2_{N})\big\}\,,\notag
\end{align}
where $C>0$ independent of $\eps$ and $\eta$.

By combining the above estimates \eqref{3.15}, \eqref{3.16}, \eqref{3.17} and taking $\eta>0$ sufficiently small, then we get the estimate \eqref{part2-pabc} uniformly for $0<\varepsilon<1$, thus complete the proof of Lemma \ref{Macro-estimate}.
\end{proof}

Based on the microscopic estimate \eqref{micro} and the macroscopic estimate \eqref{part2-pabc}, we can derive the uniform energy estimate. Take the following  form of linear combination of \eqref{micro} and \eqref{part2-pabc},
\begin{align*}
&\frac{\mathrm{d}}{\mathrm{d}t}\Big\{\cE^2_{N}+d_1 \varepsilon\sum_{|\alpha|\le N-1}
\Big((\p^\alpha r,\nabla_x \p^\alpha (a, - b, c))_{L^2(\RR^3_x)}
+(\p^\alpha b, \nabla_x \p^\alpha a)_{L^2(\RR^3_x)}\Big)\Big\}
+\frac{1}{\varepsilon^2}\cD_{N}^2\\+ &d_1\,\cC^2_{N}
\le C\big(\frac{1}{\varepsilon}\cE_{N} \cD^2_{N}+(\cE_{N}\cC_{N})^2\big)
+\frac{d_1 \widetilde{C}}{\varepsilon^2}\cD^2_{N}
+d_1 \widetilde{C}\cE_{N}(\cC^2_{N}+\cD^2_{N}).
\end{align*}

\bigskip

Choose $d_1$ firstly such that $1-d_1 \widetilde{C}>0$, we have for $0<\varepsilon<1$
\begin{align*}
&\frac{\mathrm{d}}{\mathrm{d}t}\Big[\cE^2_{N}+d_1 \varepsilon\sum_{|\alpha|\le N-1}
\Big((\p^\alpha r,\nabla_x \p^\alpha (a, - b, c))_{L^2(\RR^3_x)}
-(\p^\alpha b, \nabla_x \p^\alpha a)_{L^2(\RR^3_x)}\Big)\Big]
\\
+&(1-d_1 \widetilde{C})\frac{1}{\varepsilon^2}\cD^2_{N}+d_1\,\cC^2_{N}
\leq( C+d_1 \widetilde{C})\frac{1}{\varepsilon}\cE_{N}\cD^2_{N}+(C\cE_{N}+d_1 \widetilde{C})\cE_{N}\cC^2_{N} .
\end{align*}
Set
\begin{align*}
E^2_N=\Big[\cE^2_N+d_1 \varepsilon\sum_{|\alpha|\le N-1}
\Big((\nabla_x\p^\alpha r, \p^\alpha (a, - b, c))_{L^2(\RR^3_x)}
-(\p^\alpha b, \nabla_x \p^\alpha a)_{L^2(\RR^3_x)}\Big)\Big].
\end{align*}
\eqref{part2-D1} implies that
\begin{align*}
|\sum_{|\alpha|\le N-1}(\nabla_x\p^\alpha r, &\  \p^\alpha (a, - b, c))_{L^2(\RR^3_x)}
+(\p^\alpha b, \nabla_x \p^\alpha a)_{L^2(\RR^3_x)}|
\\&
\lesssim
\|g_2\|_{H^{N}(\RR^3_x; L^2(\RR^3_v))}^2
+\|\cA\|_{H^{N}(\RR^3_x)}^2= \cE^2_N,
\end{align*}
then we can choose $d_1>0$ small such that, for any $0<\varepsilon<1$
\begin{equation}\label{c1c2}
c_1 \cE_N\le E_N\le c_2\cE_N
\end{equation}
for some positive constants $c_1$ and $c_2$. Thus we prove the following theorem:
\begin{theo} {\rm \bf (Global Energy Estimate) \ }
\label{part2-energyineq}
For $N\geq 2$, if $g$ is a solution of the scaled Boltzmann equation \eqref{e2}, then there exists a constant $c_0>0$ independent of $\eps$ such that   if $E_N\le 1$, then
\begin{equation}\label{global energy}
\frac{\mathrm{d}}{\mathrm{d}t}E^2_N+\frac{1}{\varepsilon^2}\cD^2_N+\cC^2_N\le c_0 E_N\big\{\frac{1}{\varepsilon}\cD^2_N+
\cC^2_N\big\}
\end{equation}
holds as far as $g$ exists.
\end{theo}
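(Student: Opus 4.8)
The plan is to combine the microscopic estimate from Proposition~\ref{microenergy} with the macroscopic estimate from Lemma~\ref{Macro-estimate}, exactly in the linear combination already displayed just before the theorem, and then to use the norm equivalence \eqref{c1c2} together with the smallness hypothesis $E_N \le 1$ to absorb the ``bad'' terms into the dissipation on the left-hand side. The starting point is the inequality
\begin{align*}
&\frac{\mathrm{d}}{\mathrm{d}t}E^2_{N}
+(1-d_1 \widetilde{C})\frac{1}{\varepsilon^2}\cD^2_{N}+d_1\,\cC^2_{N}\\
&\qquad\leq( C+d_1 \widetilde{C})\frac{1}{\varepsilon}\cE_{N}\cD^2_{N}+(C\cE_{N}+d_1 \widetilde{C})\cE_{N}\cC^2_{N},
\end{align*}
which is derived above from \eqref{micro} and \eqref{part2-pabc} after choosing $d_1$ so small that $1-d_1\widetilde C>0$.

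The first step is to rename constants: since $1-d_1\widetilde C$ is a fixed positive number, I would divide through (or rather rescale the whole combination) so that the two dissipative terms $\frac{1}{\varepsilon^2}\cD^2_N$ and $\cC^2_N$ appear with coefficient exactly $1$ on the left, absorbing $(1-d_1\widetilde C)$ and $d_1$ into the generic constant governing the right-hand side. Next, on the right-hand side I would replace every $\cE_N$ by $E_N$ using the equivalence \eqref{c1c2}, $\cE_N \le c_1^{-1}E_N$; this is the key move, since it is $E_N$ (not $\cE_N$) that is the Lyapunov functional whose smallness we control. The term $(C\cE_N+d_1\widetilde C)\cE_N\cC^2_N$ is the only one carrying a stray additive constant $d_1\widetilde C$ not multiplied by $\cE_N$, so here I would invoke the hypothesis $E_N\le 1$, hence $\cE_N\le c_1^{-1}$, to bound $d_1\widetilde C \lesssim d_1\widetilde C\, \cE_N \cdot c_1$ — that is, to convert the additive constant into something proportional to $\cE_N$ and therefore to $E_N$. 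After these substitutions both terms on the right are of the form $(\text{const})\cdot E_N\cdot\{\frac{1}{\varepsilon}\cD^2_N+\cC^2_N\}$, and collecting the constant into a single $c_0>0$ gives precisely \eqref{global energy}.

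The main thing to verify carefully, and what I expect to be the only genuine obstacle, is the handling of that additive $d_1\widetilde C$ in front of $\cE_N\cC^2_N$: one must make sure the smallness assumption $E_N\le 1$ is strong enough to fold it into the $c_0 E_N$ structure without leaving a bare $\cC^2_N$ term on the right that could compete with the $d_1\cC^2_N$ on the left. This works because $E_N \le 1$ forces $\cE_N$ to be bounded by a fixed constant, so $d_1\widetilde C \le d_1\widetilde C\, c_1 \cE_N \le (\text{const})\,E_N$, and the requirement that the resulting right-hand coefficient be strictly smaller than the left-hand dissipation coefficients is exactly what fixes the threshold on $E_N$ (and is later what drives the continuation argument for global existence and the global bound \eqref{global estimates}). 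The remaining steps are purely bookkeeping: the $\frac{1}{\varepsilon}$ weighting on $\cD^2_N$ is consistent on both sides because the genuinely dangerous nonlinear term $\frac{1}{\varepsilon}\cE_N\cD^2_N$ already comes with that factor, and no cancellation beyond what \eqref{micro} and \eqref{part2-pabc} supply is needed.
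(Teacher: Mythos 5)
Your proposal follows exactly the paper's route: the theorem is nothing more than the displayed linear combination of \eqref{micro} and \eqref{part2-pabc} immediately preceding it, followed by the substitution $\cE_N\le c_1^{-1}E_N$ from \eqref{c1c2} and the use of $E_N\le 1$, so the overall plan and the conclusion are right. There is, however, one false step in your justification of the absorption: you write $d_1\widetilde C\le d_1\widetilde C\,c_1\cE_N$, invoking $\cE_N\le c_1^{-1}$ --- but that hypothesis gives $c_1\cE_N\le 1$, i.e.\ the \emph{opposite} inequality; a bare positive constant can never be bounded above by a multiple of $\cE_N$, which may be arbitrarily small. Fortunately the step you are trying to justify does not need any such bound: the coefficient $(C\cE_N+d_1\widetilde C)$ multiplies $\cE_N\cC_N^2$, which already carries a factor $\cE_N\le c_1^{-1}E_N$, so $d_1\widetilde C\,\cE_N\cC_N^2\le (d_1\widetilde C/c_1)\,E_N\cC_N^2$ with no smallness required. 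The hypothesis $E_N\le 1$ is instead what tames the genuinely quadratic term $(\cE_N\cC_N)^2=\cE_N^2\cC_N^2\le c_1^{-2}E_N^2\cC_N^2\le c_1^{-2}E_N\cC_N^2$, and, as you correctly note, what bounds the coefficient $C\cE_N+d_1\widetilde C$ by a fixed constant. With that correction the rest of your bookkeeping --- normalizing the dissipation coefficients $(1-d_1\widetilde C)$ and $d_1$ into the generic $c_0$, and keeping the $\varepsilon^{-1}$ (rather than $\varepsilon^{-2}$) weight on $\cD_N^2$ on the right --- matches the paper.
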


\subsection{Proof of Theorem $\ref{theorem1}$} Now, we are ready to prove Theorem \ref{theorem1} by the usual continuation arguments.
\begin{proof}
 We choose the initial data $g_{0,\eps}$ such that
$$\cE_N(0)=\|g_{0,\eps}\|_{H^N(\mathbb{R}^3_x,L^2(\mathbb{R}^3_v))} \leq M\,,$$
where $M$ is defined as
\begin{equation}\label{small intial data}
 M = \min\{\delta_0, \tfrac{1}{c_2}, \tfrac{1}{4c_0 c_2}\}\,.
\end{equation}
Recall that $c_0, c_1, c_2$ are constants in \eqref{global energy} and \eqref{c1c2}, and $\delta_0$ appears in Theorem \ref{local-solution}. Note that $\cE_N(0) \leq M \leq \delta_0$, then from Theorem \ref{local-solution} there exists a solution $g\in L^\infty([0,T];H^N(\mathbb{R}^3_x),L^2(\mathbb{R}^3_v))$ for some $T>0$, and from the local estimate \eqref{local-est}, we have $\cE_N(t) \leq 2 M$ for $0 < t < T$. We define
$$T^* = \sup \{t \in \mathbb{R}^+ | \cE_N(t) \leq 2 M\}> 0 \,.$$
Note that on $[0, T]$ for $0< T < T^*$, $E_N(t) \leq c_2 \cE_N(t) \leq 2 c_2 M <1$. Then the global energy estimate \eqref{global energy} implies that
\begin{equation}
 \frac{\mathrm{d}}{\mathrm{d}t}E^2_N+(1-2c_0 M)\{\frac{1}{\varepsilon^2}\cD^2_N+\cC^2_N\} \leq 0\,.
\end{equation}
From the choice of $M$, $1- 2 c_0 M > \frac{1}{2}$. Thus
$$
E^2_N(T) + \frac{1}{2}\int^T_0 \{\frac{1}{\varepsilon^2}\cD^2_N+\cC^2_N\}\,\mathrm{d}t \leq E^2_N(0)\,,
$$
which implies $\cE_N(T) \leq \frac{c_2}{c_1} M$. Modify the initial data gives that $T^* = \infty$, Thus we finish the proof of Theorem \ref{theorem1}.
\end{proof}

\section{Limit to Incompressible Navier-Stokes-Fourier Equations}

\subsection{The limit from the global energy estimate}

Based on Theorem\ref{theorem1}, there exists a $\delta_0 > 0$, such that the Boltzmann equation \eqref{e2} admits a global solution $g_\eps$ with initial data
$$
  g_{\eps,0}(x,v) = \{\rho_0(x) + \mathrm{u}_0(x)\cdot v+ \theta_0(x)(\tfrac{|v|^2}{2}-\tfrac{3}{2})\}\sqrt{\mu} + \tilde{g}_{\eps,0}(x,v)\,,
$$
where $$\|(\rho_0\,, \mathrm{u}_0\,, \theta_0)\|_{H^N(\mathbb{R}^3_x)} \leq \delta_0\,,$$ and $$\tilde{g}_{\eps,o}\in \cN^\perp\,,\quad \text{with}\quad\! \|\tilde{g}_{\eps,o}\|_{H^N(\mathbb{R}^3_x,L^2(\mathbb{R}^3_v))} \leq \delta_0\,.$$
Furthermore, the global energy estimate \eqref{global estimates} holds, i.e.
\begin{equation}\label{global bound E}
  \sup_{t \geq 0}\cE^2_N(t) = \sup_{t \geq 0} \sum_{|\alpha| \leq N}\int_{\mathbb{R}^6_{x,v}}|\p^\alpha_x g_\eps(t)|^2\,\mathrm{d}v\mathrm{d}x \leq C\,,
\end{equation}
and
\begin{equation}\label{global bound D}
 \int^\infty_0 \cD^2_N(t)\,\mathrm{d}t= \sum_{|\alpha| \leq N}\int^\infty_0 \int_{\mathbb{R}^3_x}|\!|\!|\p^\alpha_x\{\mathbf{I}-\mathbf{P}\} g_\eps(t)|\!|\!|^2\,\mathrm{d}x\mathrm{d}t \leq C \eps^2\,,
\end{equation}
and
\begin{equation}\label{global bound C}
\int^\infty_0 \cC^2_N(t)\,\mathrm{d}t= \sum_{|\alpha| \leq N}\int^\infty_0  \int_{\mathbb{R}^6_{x,v}}|\p^\alpha_x \mathbf{P} g_\eps(t)|^2\,\mathrm{d}x\mathrm{d}v\mathrm{d}t \leq C\,.
\end{equation}

From the energy bound \eqref{global bound E}, there exists a $g_0 \in L^\infty([0, +\infty); H^N(\RR^3_x; L^2(\RR^3_v)))$, such that
\begin{equation}\label{weak convergence g-eps}
   g_\eps \rightarrow g_0\quad  \mbox{as}\quad\!\eps\rightarrow 0\,,
\end{equation}
where the convergence is weak$\mbox{-}\star$ for $t$, strongly in $H^{N-\eta}(\mathbb{R}^3_x)$ for any $\eta>0$, and weakly in $L^2(\mathbb{R}^3_v)$.

From the energy dissipation bound \eqref{global bound D} and the inequality \eqref{equivalent}, we have
\begin{equation}\label{g2 vanish}
  \{\mathbf{I}-\mathbf{P}\} g_\eps \rightarrow 0\,,\quad \text{in}\quad\! L^2([0, +\infty); H^N(\RR^3_x; L^2(\RR^3_v)))\quad\!\text{as}\quad\! \eps\rightarrow 0\,.
\end{equation}
Combining the convergence \eqref{weak convergence g-eps} and \eqref{g2 vanish}, we have $\{\mathbf{I}-\mathbf{P}\} g_0 =0\,.$ Thus, there exists $(\rho, \mathrm{u}, \theta)\in L^\infty([0, +\infty); H^N(\RR^3_x)$, such that
\begin{equation}\label{g0}
g_0(t,x,v) = \rho(t,x) + \mathrm{u}(t,x)\cdot v + \theta(t,x)(\tfrac{|v|^2}{2}-\tfrac{3}{2})\,.
\end{equation}

\subsection{The limiting equations}
Now we define the fluid variables as follows:
\begin{equation}\nonumber
\rho_\eps = (g_\eps, \sqrt{\mu})_{L^2_v}\,, \quad \mathrm{u}_\eps = (g_\eps, v\sqrt{\mu})_{L^2_v}\,, \quad \theta_\eps = (g_\eps, (\tfrac{|v|^2}{3}-1)\sqrt{\mu})_{L^2_v}\,,
\end{equation}
where $L^2_v$ denotes $L^2(\mathbb{R}^3_v)$. It follows from \eqref{weak convergence g-eps} that
\begin{equation}\label{weak convergence fluid}
   (\rho_\eps, \mathrm{u}_\eps, \theta_\eps) \rightarrow (\rho, \mathrm{u}, \theta)\quad \text{as}\quad\! \eps\rightarrow 0\,,
\end{equation}
where the convergence is weak$\mbox{-}\star$ for $t$, strongly in $H^{N-\eta}(\mathbb{R}^3_x)$ for any $\eta>0$, and weakly in $L^2(\mathbb{R}^3_v)$.

Taking inner products with the Boltzmann equation \eqref{e2} in $L^2_v$ by $\sqrt{\mu}, v\sqrt{\mu}$ and $(\frac{|v|^2}{3}-1)\sqrt{\mu}$ respectively gives the local conservation laws:
\begin{equation}\label{local cons-law}
\begin{cases}
   \p_t \rho_\eps + \tfrac{1}{\eps} \grad\!\cdot\! \mathrm{u}_\eps = 0\,, \\
   \p_t \mathrm{u}_\eps + \tfrac{1}{\eps} \grad(\rho_\eps + \theta_\eps) + \grad\!\cdot\! (\sqrt{\mu}\widehat{A}\,, \tfrac{1}{\eps}\mathcal{L} g_\eps)_{L^2(\mathbb{R}^3_v)} = 0\,,\\
   \p_t \theta_\eps + \tfrac{2}{3}\tfrac{1}{\eps} \grad\!\cdot\! \mathrm{u}_\eps + \tfrac{2}{3}\grad\!\cdot\! (\sqrt{\mu}\widehat{B}\,, \tfrac{1}{\eps}\mathcal{L} g_\eps)_{L^2(\mathbb{R}^3_v)} =0 \,.
\end{cases}
\end{equation}

\noindent{\bf Incompressibility and Boussinesq relation:} From the first equation of \eqref{local cons-law} and the global energy bound \eqref{global bound E}, it is easy to deduce
\begin{equation}\label{div u vanish}
  \grad\!\cdot\! \mathrm{u}_\eps \rightarrow 0 \quad \text{in the sense of distributions}\quad\!\text{as}\quad\! \eps\rightarrow 0\,.
\end{equation}
Combining with the convergence \eqref{weak convergence fluid}, we have
\begin{equation}\label{incompressibility}
   \grad\!\cdot\! \mathrm{u} = 0 \,.
\end{equation}
From the second equation of \eqref{local cons-law},
$$
   \grad(\rho_\eps + \theta_\eps) = -\eps \p_t \mathrm{u}_\eps + \grad\!\cdot\! (\mathcal{L}(\sqrt{\mu}\widehat{A}), \{\mathbf{I}-\mathbf{P}\}g_\eps)_{L^2(\mathbb{R}^3_v)}\,.
$$
From the global energy dissipation \eqref{global bound D}, it follows that
\begin{equation}\label{rho-theta vanish}
  \grad(\rho_\eps + \theta_\eps) \rightarrow 0 \quad \text{in the sense of distributions}\quad\!\text{as}\quad\! \eps\rightarrow 0\,,
\end{equation}
which gives the Boussinesq relation
\begin{equation}\label{Boussinesq}
  \grad(\rho + \theta) = 0\,.
\end{equation}

\noindent{\bf Convergence of $\tfrac{3}{5}\theta_\eps - \tfrac{2}{5}\rho_\eps$:}
The third equation minus $\frac{2}{3}$ times the first equation in \eqref{local cons-law} gives
\begin{equation}\label{theta equation}
 \p_t (\tfrac{3}{5}\theta_\eps - \tfrac{2}{5}\rho_\eps)  + \tfrac{2}{5}\grad\!\cdot\! (\sqrt{\mu}\widehat{B}\,, \tfrac{1}{\eps}\mathcal{L} g_\eps)_{L^2(\mathbb{R}^3_v)} =0\,.
\end{equation}
From the global energy estimate \eqref{global bound E}, we have that for almost every $t\in [0,\infty)$, $\|(\tfrac{3}{5}\theta_\eps - \tfrac{2}{5}\rho_\eps)(t)\|_{H^N(\mathbb{R}^3_x)} \leq C$. Then there exists a $\tilde{\theta} \in L^\infty([0,\infty;H^N(\mathbb{R}^3_x)))$, so that
\begin{equation}
   (\tfrac{3}{5}\theta_\eps - \tfrac{2}{5}\rho_\eps)(t) \rightarrow \tilde{\theta}(t)\quad \text{in}\quad\! H^{N-\eta}(\mathbb{R}^3_x)\,,
\end{equation}
for any $\eta > 0$ as $\eps\rightarrow 0$. Furthermore, using the equation \eqref{theta equation}, we can show the equi-continuity in $t$. Indeed, $[t_1, t_2]\subset [0,\infty)$, any test function $\chi(x)$ and $|\alpha| \leq N-1$,
\begin{equation}\label{equi-cont}
\begin{aligned}
   &\int_{\mathbb{R}^3_x} \left[\p^\alpha_x (\tfrac{3}{5}\theta_\eps - \tfrac{2}{5}\rho_\eps)(t_2)- \p^\alpha_x (\theta_\eps - \tfrac{2}{3}\rho_\eps)(t_1)\right]\chi(x) \,\mathrm{d}x\\
   = & \int^{t_2}_{t_1}\int_{\mathbb{R}^3_x} (\sqrt{\mu}\widehat{B}\,, \tfrac{1}{\eps}\mathcal{L} \{\mathbf{I}-\mathbf{P}\}\grad\p^\alpha_x g_\eps)_{L^2(\mathbb{R}^3_v)} \chi(x)\,\mathrm{d}x\mathrm{d}t\\
    \lesssim & \frac{1}{\eps^2}\int^{t_2}_{t_1} \cD^2_N(g_\eps(t))\,\mathrm{d}t\,.
\end{aligned}
\end{equation}
Thus the energy dissipation estimate \eqref{global bound D} implies the equi-continuity in $t$. From the Arzel\`{a}-Ascoli Theorem, $\tilde{\theta} \in C([0,\infty);H^{N-1-\eta}(\mathbb{R}^3_x))\cap L^\infty([0,\infty);H^{N-\eta}(\mathbb{R}^3_x))$, and
\begin{equation}\label{theta convergence}
   \tfrac{3}{5}\theta_\eps - \tfrac{2}{5}\rho_\eps \rightarrow \tilde{\theta}\quad \text{in}\quad\! C([0,\infty);H^{N-1-\eta}(\mathbb{R}^3_x))\cap L^\infty([0,\infty);H^{N-\eta}(\mathbb{R}^3_x))\,,
\end{equation}
as $\eps\rightarrow 0$ for any $\eta >0$. Note that $\tilde{\theta} = \tfrac{3}{5}\theta - \tfrac{2}{5}\rho$ and $\theta = (\tfrac{3}{5}\theta - \tfrac{2}{5}\rho) + \frac{2}{5}(\rho + \theta)$, and the relation \eqref{Boussinesq}, we get $\tilde{\theta}=\theta$ and $\rho + \theta =0$.

\noindent {\bf Convergence of $\mathcal{P}u_\eps$:} Taking the Leray projection operator $\mathcal{P}$ on the second equation of \eqref{local cons-law} gives
\begin{equation}\label{Pu equation}
 \p_t \mathcal{P}\mathrm{u}_\eps  + \mathcal{P} \grad\!\cdot\! (\sqrt{\mu}\widehat{A}\,, \tfrac{1}{\eps}\mathcal{L} g_\eps)_{L^2(\mathbb{R}^3_v)} =0\,.
\end{equation}
Similar arguments as above deduce that there exists a divergence free $\tilde{\mathrm{u}} \in L^\infty([0,\infty;H^N(\mathbb{R}^3_x)))$, such that
\begin{equation}\label{u convergence}
   \mathcal{P}\mathrm{u}_\eps \rightarrow \tilde{\mathrm{u}} \quad \text{in}\quad\! C([0,\infty);H^{N-1-\eta}(\mathbb{R}^3_x))\cap L^\infty([0,\infty);H^{N-\eta}(\mathbb{R}^3_x))\,,
\end{equation}
as $\eps\rightarrow 0$ for any $\eta >0$. Note that $\tilde{\mathrm{u}} = \mathcal{P}\mathrm{u}$ and \eqref{incompressibility}, we have $\tilde{\mathrm{u}}=\mathrm{u}$.

Follow the standard calculations, (for example, \cite{BGL1}), the local conservations laws can be rewritten as
\begin{equation}\label{NS with error}
\begin{cases}
   \p_t \rho_\eps + \tfrac{1}{\eps} \grad\!\cdot\! \mathrm{u}_\eps = 0\,, \\
   \p_t \mathrm{u}_\eps + \tfrac{1}{\eps} \grad(\rho_\eps + \theta_\eps) + \grad\!\cdot\! (\mathrm{u}_\eps \otimes \mathrm{u}_\eps - \frac{|\mathrm{u}_\eps|^2}{3}I) = \nu\grad\!\cdot\!\Sigma(\mathrm{u}_\eps) + \grad\!\cdot\! R_{\eps,\mathrm{u}}\,,\\
   \p_t \theta_\eps + \tfrac{2}{3}\tfrac{1}{\eps} \grad\!\cdot\! \mathrm{u}_\eps + \grad\!\cdot\! (\mathrm{u}_\eps \theta_\eps) = \kappa \grad\!\cdot\![\grad \theta_\eps] + \grad\!\cdot\! R_{\eps,\theta}\,,
\end{cases}
\end{equation}
where $\Sigma(\mathrm{u})=\grad \mathrm{u} + \grad \mathrm{u}^T - \frac{2}{3}\grad\!\cdot\! \mathrm{u} I\,,$ and $R_{\eps,\mathrm{u}}, R_{\eps,\theta}$ have of the form
\begin{equation}\label{error}
\begin{aligned}
  &-\eps(\zeta(v), \p_t g_\eps)_{L^2_v{(\mathbb{R}^3)}} + (\zeta(v), v\!\cdot\!\grad\{\mathbf{I}-\mathbf{P}\} g_\eps)_{L^2{(\mathbb{R}^3_v)}} + (\zeta(v), \Gamma(\{\mathbf{I}-\mathbf{P}\} g_\eps,\{\mathbf{I}-\mathbf{P}\} g_\eps))_{L^2{(\mathbb{R}^3_v)}}\\
  & + (\zeta(v), \Gamma(\{\mathbf{I}-\mathbf{P}\}g_\eps, \mathbf{P} g_\eps))_{L^2{(\mathbb{R}^3_v)}}+ (\zeta(v), \Gamma(\mathbf{P} g_\eps,\{\mathbf{I}-\mathbf{P}\} g_\eps))_{L^2{(\mathbb{R}^3_v)}}\,.
\end{aligned}
\end{equation}
For $R_{\eps,u}$, take $\zeta(v)= \sqrt{\mu}\widehat{A}$, while for $R_{\eps,\theta}$, take $\zeta(v)= \sqrt{\mu}\widehat{B}$.

\noindent{\bf The equations of  $\theta$ and $\mathrm{u}$: } Decompose $\mathrm{u}_\eps = \mathcal{P}\mathrm{u}_\eps + \mathcal{Q}\mathrm{u}_\eps$, where $\mathcal{Q}= \grad \Delta^{-1}_{\!x}\grad\cdot$ is a gradient. Denote $\tilde{\theta}_\eps= \tfrac{3}{5}\theta_\eps - \tfrac{2}{5}\rho_\eps$. Then from \eqref{NS with error}, the following equation is satisfied in the sense of distributions:
\begin{equation}\nonumber
  \p_t \tilde{\theta}_\eps + \tfrac{3}{5}\grad\!\cdot\!(\mathcal{P}\mathrm{u}_\eps \tilde{\theta}_\eps) - \tfrac{3}{5}\kappa \Delta_{\! x}\tilde{\theta}_\eps = \grad\!\cdot\!\widetilde{R}_{\eps,\theta}\,,
\end{equation}
where
\begin{equation}\label{R-theta}
  \widetilde{R}_{\eps,\theta} = \tfrac{3}{5}R_{\eps,\theta} - \tfrac{6}{25}\mathcal{P}\mathrm{u}_\eps (\rho_\eps + \theta_\eps)- \tfrac{6}{25}\mathcal{Q}\mathrm{u}_\eps (\rho_\eps + \theta_\eps)- \tfrac{3}{5}\mathcal{Q}\mathrm{u}_\eps \tilde{\theta}_\eps + \tfrac{6}{25} \kappa \Delta_{\!x} (\rho_\eps + \theta_\eps)\,.
\end{equation}

For any $T>0$, let $\phi(t,x)$ be a text function satisfying $\phi(t,x) \in C^1([0,T], C^\infty_c(\mathbb{R}^3_x))$ with $\phi(0,x)=1$ and $\phi(t,x)= 0$ for $t \geq T'$, where $T' < T$. Noting \eqref{error}, and using the global bounds \eqref{global bound E}, \eqref{global bound D} and \eqref{global bound C}, it is easy to show that
\begin{equation}\label{error vanish}
\int^T_0 \int_{\mathbb{R}^3_x} \grad\!\cdot\! R_\eps(t,x) \phi(t,x)\,\mathrm{d}x\mathrm{d}t \rightarrow 0\quad \text{as}\quad\! \eps \rightarrow 0\,,
\end{equation}
where $R_\eps = R_{\eps,\mathrm{u}}$ or $R_{\eps,\theta}$. For other terms in \eqref{R-theta}, noting that the convergence \eqref{div u vanish} and \eqref{rho-theta vanish}, together with \eqref{error vanish}, we have
\begin{equation}\label{R-error vanish}
\int^T_0 \int_{\mathbb{R}^3_x} \grad\!\cdot\! \widetilde{R}_{\eps,\theta}(t,x) \phi(t,x)\,\mathrm{d}x\mathrm{d}t \rightarrow 0\quad \text{as}\quad\! \eps \rightarrow 0\,.
\end{equation}

From the convergence \eqref{theta convergence} and \eqref{u convergence}, for $N >1$,  as $\eps \rightarrow 0$,
\begin{equation}\label{t theta }
 \int^T_0\int_{\mathbb{R}^3_x} \p_t \tilde{\theta}_\eps(t,x) \phi(t,x)\,\mathrm{d}x\mathrm{d}t \rightarrow -\int_{\mathbb{R}^3_x} (\tfrac{3}{5}\theta_0 - \tfrac{2}{5}\rho_0)(x)\mathrm{d}x - \int^T_0\int_{\mathbb{R}^3_x} \theta(t,x) \p_t \phi(t,x)\,\mathrm{d}x\mathrm{d}t\,,
\end{equation}
\begin{equation}\label{laplace theta}
  \int^T_0\int_{\mathbb{R}^3_x} \Delta_{\!x} \tilde{\theta}_\eps \phi(t,x)\,\mathrm{d}x\mathrm{d}t \rightarrow \int^T_0\int_{\mathbb{R}^3_x} \theta(t,x) \Delta_{\!x}\phi(t,x)\,\mathrm{d}x\mathrm{d}t\,,
\end{equation}
and
\begin{equation}\label{u-theta}
     \int^T_0\int_{\mathbb{R}^3_x} \grad\!\cdot\!(\mathcal{P}\mathrm{u}_\eps \tilde{\theta}_\eps)\phi(t,x)\,\mathrm{d}x\mathrm{d}t  \rightarrow -\int^T_0\int_{\mathbb{R}^3_x} \mathrm{u}(t,x) \theta(t,x)\!\cdot\! \grad \phi(t,x) \,\mathrm{d}x\mathrm{d}t\,.
\end{equation}

Acting the Leray projection $\mathcal{P}$ on the second equation of \eqref{NS with error}, we have the following equation
\begin{equation}\nonumber
  \p_t \mathcal{P}\mathrm{u}_\eps + \mathcal{P}\grad\!\cdot\!(\mathcal{P}\mathrm{u}_\eps \otimes \mathcal{P}\mathrm{u}_\eps)- \nu \Delta_{\!x}\mathcal{P} \mathrm{u}_\eps = \mathcal{P}\grad\!\cdot\! \widetilde{R}_{\eps,\mathrm{u}}
\end{equation}
where
\begin{equation}
\widetilde{R}_{\eps,\mathrm{u}} = R_{\eps,\mathrm{u}} - \mathcal{P}\!\cdot\!(\mathcal{P}\mathrm{u}_\eps \otimes \mathcal{Q}\mathrm{u}_\eps+\mathcal{Q}\mathrm{u}_\eps \otimes \mathcal{P}\mathrm{u}_\eps+ \mathcal{Q}\mathrm{u}_\eps \otimes \mathcal{Q}\mathrm{u}_\eps )\,.
\end{equation}

Similar as above we can take the vector-valued test function $\psi(t,x)$ with $\grad\!\cdot\! \psi=0$, and prove that as $\eps \rightarrow 0$
\begin{equation}
\begin{aligned}
  &\int^T_0\int_{\mathbb{R}^3_x}(\p_t \mathcal{P}\mathrm{u}_\eps + \mathcal{P}\grad\!\cdot\!(\mathcal{P}\mathrm{u}_\eps \otimes \mathcal{P}\mathrm{u}_\eps)- \nu \Delta_{\!x}\mathcal{P} \mathrm{u}_\eps)\cdot \psi(t,x)\,\mathrm{d}x\mathrm{d}t \\
  \rightarrow & -\int_{\mathbb{R}^3_x} \mathcal{P}\mathrm{u}_0(x)\!\cdot\! \psi(0,x)\,\mathrm{d}x - \int^T_0\int_{\mathbb{R}^3_x}(u\!\cdot\!\p_t\psi+ \mathrm{u}\otimes \mathrm{u}:\grad\psi-\nu \mathrm{u}\!\cdot\! \Delta_{\!x}\psi\,\mathrm{d}x\mathrm{d}t\,,
\end{aligned}
\end{equation}
and
\begin{equation}
\int^T_0 \int_{\mathbb{R}^3_x} \mathcal{P}\grad\!\cdot\! \widetilde{R}_{\eps,\mathrm{u}}(t,x) \phi(t,x)\,\mathrm{d}x\mathrm{d}t \rightarrow 0\quad \text{as}\quad\! \eps \rightarrow 0\,.
\end{equation}

Collecting all above convergence results, we have shown that $(\mathrm{u},\theta)\in C([0,\infty);H^{N-1}(\mathbb{R}^3_x))\cap L^\infty([0,\infty);H^{N}(\mathbb{R}^3_x))$ satisfies the following incompressible Navier-Stokes equations
\begin{equation}\nonumber
\begin{cases}
   \p_t \mathrm{u} + \mathrm{u}\!\cdot\! \grad \mathrm{u} + \grad p = \nu \Delta_{\!x}\mathrm{u}\,,\\
   \grad\!\cdot\! \mathrm{u}=0\,,\\
   \p_t \theta + \mathrm{u}\!\cdot\! \grad \theta = \kappa \Delta_{\!x}\theta\,,
\end{cases}
\end{equation}
with initial data:
\begin{equation}\nonumber
   \mathrm{u}(0,x)= \mathcal{P}\mathrm{u}_0(x)\,, \quad \theta(0,x)= \tfrac{3}{3}\theta_0(x)-\tfrac{2}{5}\rho_0(x)\,.
\end{equation}

\bigbreak

\noindent{\bf Acknowledgment:}
Ning Jiang was supported by a grant from the National Natural Science Foundation of China under contract  No. 11171173, Chaojiang Xu was supported by a grant from the National Natural Science Foundation of China under contract  No. 11171261, and Huijiang Zhao was supported by two grants from the National Natural Science Foundation of China under contracts 10925103 and 11261160485 respectively. This work was also supported
by the Fundamental Research Funds for the Central Universities.


\begin{thebibliography}{99}


\bibitem{amuxy3} R. Alexandre, Y. Morimoto, S.  Ukai, C.-J.   Xu
and T. Yang, {\it
Global existence and full regularity of the Boltzmann equation without angular cutoff}.
Comm. Math. Phys. {\bf 304} (2011) 513-581.


\bibitem{amuxy4-2} R. Alexandre, Y. Morimoto, S. Ukai,,  C.-J. Xu
and T. Yang, {\it The Boltzmann equation without angular cutoff in the whole
space: I. Global existence for soft potential}. J. Funct. Anal. {\bf 262}
 (2012), 915-1010.


\bibitem{amuxy4-3} R. Alexandre, Y. Morimoto, S. Ukai,,  C.-J. Xu
and T. Yang, {\it The Boltzmann equation without angular cutoff in the whole
space: II.  Global existence for hard potentia}l.
Anal. Appl.(Singap.) {\bf 9} (2011), 113-134.


\bibitem{amuxy7}
R. Alexandre, Y. Morimoto, S. Ukai,,  C.-J. Xu and T. Yang,
{ \it Local existence with mild regularity for the Boltzmann equation}.
Kinet. Relat. Models  {\bf 6} (2013), no. 4, 1011-1041.

\bibitem{al-3}
R. Alexandre and C. Villani,  {\it  On the Boltzmann equation for
long-range interaction}.  Commun. Pure and Appl. Math. {\bf 55} (2002),  30-70.

\bibitem{Arsenio} D. Arsenio,
{\it From Boltzmann's Equation to the incompressible Navier-Stokes-Fourier system with long-range interactions}. { Arch. Ration. Mech. Anal.} {\bf 206} (2012), no. 3, 367-488

\bibitem{BGL1} C. Bardos, F. Golse, and C. D. Levermore,
{\em Fluid dynamic limits of kinetic equations I: formal
derivation.,} J. Stat. Phys. {\bf 63} (1991), 323-344.

\bibitem {BGL2} C. Bardos, F. Golse, and C. D. Levermore,
{\em Fluid dynamic limits of kinetic equations II: convergence proof
for the Boltzmann equation.} Commun. Pure and Appl. Math. {\bf 46}
(1993), 667-753

\bibitem {BGL3} C. Bardos, F. Golse, and C. D. Levermore,
{\em The acoustic limit for the Boltzmann equation.} { Arch. Ration.
Mech. Anal.} {\bf 153} (2000), no. 3, 177-204

\bibitem{b-u}
C. Bardos and S. Ukai, {\it The classical incompressible Navier-Stokes limit of the Boltzmann equation.} Math. Models Methods Appl. Sci. {\bf 1} (1991), no.2, 235-257.

\bibitem{Caflisch}
R. Caflisch,
{\it The fluid dynamic limit of the nonlinear Boltzmann equation.} Comm. Pure Appl. Math. {\bf 33} (1980), no. 5, 651-666.

\bibitem{DEL-89}
A. De Masi, R. Esposito, and J. L. Lebowitz,
{\it Incompressible Navier-Stokes and Euler limits of the Boltzmann equation.}
 Comm. Pure Appl. Math. {\bf 42} (1989), no. 8, 1189-1214.

\bibitem{D-L} R. J. DiPerna, P.L.  Lions, {\it On the Cauchy problem for Boltzmann
equations: global existence and weak stability}.  Ann. Math.
{\bf 130}(1989), 321-366.

\bibitem{GL} F. Golse and C. D. Levermore,
{\em The Stokes-Fourier and acoustic limits for the Boltzmann
equation.} Comm. on Pure and Appl. Math. {\bf 55} (2002), 336-393.

\bibitem{Go-Sai04} F. Golse and L. Saint-Raymond,
{\em The Navier-Stokes limit of the Boltzmann equation for bounded
collision kernels.} Invent. Math. {\bf 155} (2004), no. 1, 81--161.

\bibitem{Go-Sai09} F. Golse and L. Saint-Raymond,
{\em The incompressible Navier-Stokes limit of the Boltzmann
equation for hard cutoff potentials.}  J. Math. Pures Appl. (9) {\bf
91} (2009), no. 5, 508--552.

\bibitem{G-S} P. Gressman and R. Strain,
{\it Global classical solutions of the Boltzmann equation without angular cut-off.}
{ J. Amer. Math. Soc.} {\bf 24} (2011), no. 3, 771-847


\bibitem{guo-T} Y. Guo,
{\it Classical solution to the Boltzmann Equation for molecules with an angular cutoff.}  Arch. Rational Mech. Anal. {\bf 169} (2003) 305-353.

\bibitem{guo-1} Y. Guo,
{\it The Boltzmann equation in the whole space.} Indiana Univ. Math. J. {\bf 53-4} (2004) 1081-1094.

\bibitem{GY-06} Y. Guo,
{\it Boltzmann diffusive limit beyond the Navier-Stokes approximation.} { Comm. Pure Appl. Math.} {\bf 59} (2006),  626-687.

\bibitem{KMN} S. Kawashima, A. Matsumura and T. Nishida,
{\em On the fluid dynamical approximation to the Boltzmann equation at the level of the Navier-Stokes equation,} Comm. Math. Phys., {\bf 70} (1979), 97-124.

\bibitem{LM} C. D. Levermore and N. Masmoudi,
{\em From the Boltzmann equation to an incompressible Navier-Stokes-Fourier system.}
Arch. Ration. Mech. Anal. {\bf 196} (2010), no. 3, 753-809.

\bibitem{LM3} P. L. Lions and N. Masmoudi,
{\em From Boltzmann equation to Navier-Stokes and Euler equations
I.} Arch. Ration. Mech. Anal. {\bf 158} (2001), 173-193.

\bibitem{LM4} P. L. Lions and N. Masmoudi,
{\em From Boltzmann equation to Navier-Stokes and Euler equations
II.} Arch. Ration. Mech. Anal. {\bf 158} (2001), 195-211.

\bibitem{liu-2}
T.-P. Liu, T. Yang, T., and S.-H. Yu, {\it
Energy method for Boltzmann equation},
{ Phys. D } {\bf 188}  (2004), 178-192.

\bibitem{Nishida} T. Nishida,
{\em Fluid dynamical limit of the nonlinear Boltzmann equation to the level of the compressible Euler equation,} Comm. Math. Phys., {\bf 61} (1978), 119-148.

\end{thebibliography}
\end{document}